\providecommand{\abs}[1]{\lvert#1\rvert}
\newcommand{{\STT}}{STT}
\newcommand{\STTs}{STTs}
\DeclareMathOperator{\Ran}{Ran}
\newtheorem{theorem}{Theorem}[section] 
\newtheorem{corollary}[theorem]{corollary}
\newtheorem{definition}{Definition}[section]
\theoremstyle{remark}
\newtheorem{remark}{Remark}[section]
\title{A copula-based method to build diffusion models with prescribed marginal and serial dependence} % insert title - use \\ if it requires more than one line.
\author{Enrico Bibbona${}^{a}$, Laura Sacerdote${}^{a}$, Emiliano Torre${}^{b}$\\
${}^{a}$\small{Department of Mathematics ``G.Peano''}\\
\small{University of Torino, Torino, Italy}\\
${}^{b}$\small{Institute of Neuroscience and Medicine (INM-6) and}\\
\small{Institute for Advanced Simulation (IAS-6) and}\\
\small{JARA BRAIN Institute I, Jülich Research Centre, Jülich, Germany}\\
\small{enrico.bibbona@unito.it}\\ \small{laura.sacerdote@unito.it}\\ \small{e.torre@fz-juelich.de}\\
}
\date{}
\begin{document}
\maketitle

\begin{abstract}
This paper investigates the probabilistic properties that determine the existence of space-time transformations between diffusion processes. We prove that two diffusions are related by a monotone space-time transformation if and only if they share the same serial dependence. The serial dependence of a diffusion process is studied by means of its copula density and the effect of monotone and non-monotone space-time transformations on the copula density is discussed. This provides us a methodology to build diffusion models by freely combining prescribed marginal behaviors and temporal dependence structures. Explicit expressions of copula densities are provided for tractable models. A possible application in neuroscience is sketched as a proof of concept.
\end{abstract}
%\keywords{copulae, copulas, space-time transformations, diffusions, serial dependence, stochastic differential equations} % insert keywords separated by a semicolon
%\ams{60J60}{62M10} % insert the primary Maths Subject Classification number in the first 

\section{Introduction} \label{Section1}

Monotone space-time transformations (STTs) have been shown to map diffusion processes into diffusion processes by Kolmogorov \cite{Kolmogorov} as early as 1931. Closed form expressions for the transition probability density function (pdf) are known in a limited number of cases, including the Wiener and the Cox-Ingersol-Ross (CIR) processes. The class of tractable models is  remarkably enlarged considering STTs, since the transition pdfs of two diffusion processes related by an STT can be calculated from one another. Necessary and sufficient conditions for the existence of STTs conserving the probability mass and mapping a diffusion process into a Wiener or a CIR process are known \cite{cherkasov,intoWiener,intoFeller}.  Such conditions prescribe relationships between the drift and the diffusion coefficients that do not have a probabilistic interpretation. Transformations not conserving the probability mass have been studied in \cite{bluman,intoWienerBarriere}, but they are beyond the aim of this paper.

Recently Kozlov \cite{kozlov} has given new insights on transformability of diffusions by STTs, proving that if an SDE admits a 3-dimensional symmetry algebra then the process can be transformed into a Brownian Motion. If there are only 2 symmetry generators, then the process is transformable into a CIR process. There is no general model for the 1- and 0-dimensional cases.
This new classification explains the special role of the Brownian Motion and the CIR processes among diffusions and why their Kolmogorov equations are easier to solve, but still it does not clarify which is the common \emph{probabilistic} structure that singles out the class of processes that can be transformed by an STT into one of the two mentioned models.
The first aim of this paper is to investigate the probabilistic ground that determines such transformability properties.

Our second goal concerns modeling. Many diffusion models of natural phenomena are formulated by considering a stationary diffusion process with an assigned marginal distribution (fitted from data or postulated a-priori) and specifying additional constraints on the dependence structure of the process. The authors of \cite{givencovariance} match the stationary distribution and the empirical autocovariance. In \cite{hyperbolicinfinance} an ad-hoc special form for the infinitesimal variance coefficient is proposed. Furthermore in \cite{juliesorensenMM, nonnormalOU} an STT is applied to an Ornstein-Uhlenbeck process in order to change its marginal distribution into other specific (e.g. bivariate or heavy tailed) ones. We investigate here the possibility to make this approach more general and systematic with a method that enables to specify the marginal behavior and the serial dependence of the diffusion separately.

To characterize the serial dependence of diffusions we use copulae.
Copulae have become a very common tool for modeling dependence in applied probability and statistics (cf. \cite{durante,joe2014,nelsen}). They have found application in many different fields ranging from finance and insurance \cite{luciano,embrechtsCopulas}, to reliability \cite{antonio, spizzichino}, stochastic ordering \cite{pellerey}, geophysics \cite{extremesNature}, neuroscience \cite{elisa, pillow, neuroscience1, jenison, Onken2009, laura}, statistics \cite{robertaMI} and many more. 

For every Markov process $X(\cdot)$ there exists a copula joining the random variables $X(s)$ and $X(t)$ that verifies the Chapman-Kolmogorov equation \cite{darsow}. Lageras \cite{lageras} underlines some limitations in the applicability of the classical families of copulae in the framework of stochastic processes showing, for example, that Fr\'echet copulae imply innatural Markov processes while Archimedean copulae are incompatible with the dependence of Markov chains.
Other objections to the use of copulae can be found in \cite{mikoschcopulae} and in the related discussions. According to \cite{mikoschcopulae} ``copulas do not really fit into the theory of stochastic processes and time series analysis''. This argument is certainly true to some extent (cf. also \cite{lageras}) but it seems to us too assertive. In this paper we  discuss instances in which copulae are helpful tools when modeling with diffusion processes. 

The properties of copulae of diffusions are illustrated in Section \ref{uniformized}, by means of the so called uniformized diffusion process.  
In Section \ref{CandSTT} we prove that two diffusions can be transformed into one other via  monotone STTs if and only if they share the same copula, up to a time stretching. Non-monotone transformations are also considered and their effect on the copula density is investigated.
The new understanding of monotone STTs as mappings which preserve the copula enables modeling applications that are discussed in Section \ref{ex} together with a set of examples. With the aim of going beyond the objection of \cite{lageras}, here we propose to use specific families of copulae selected from the tractable Wiener and CIR processes and by other diffusions related to them by STTs.

\section{Mathematical background and notations}

\subsection{Copulae}

Consider a pair of real-valued random variables (r.v.s) $X$ and $Y$ taking values in 
$\Ran X = [l_X, r_X]$, $\Ran Y = [l_Y, r_Y]$, respectively, where $\Ran$ is the range operator for functions. Let $F_{X,Y}\left( x,y\right) =P\left( X\leq x,Y\leq
y\right) $ be their joint distribution. $F_{X,Y}$ encodes all the probabilistic properties of $X$ and $Y$ in the same object: the marginal distributions of $X$ and $Y$, as well as any information on the dependence between the two r.v.s. (e.g. correlations of any order). However there are instances in which it is desirable to distinguish between marginal and joint properties of $(X,Y)$. Copulae are the appropriate tool to decouple the dependence properties of two r.v.s from their marginal behavior. 
In this section we mention definitions and properties of copula functions that will be used thereafter, referring to (\cite{nelsen}) for the proofs and for other facts in copula theory.
In this paper we consider Markov processes only, where the properties of the process at any time $t$ conditioned on the state of the process at a time $s<t$ are independent of the state at any time $r<s$. Thus, bivariate copulae are enough to characterize the temporal dependencies. For this reason we do not introduce the multivariate setup and in the remainder refer to bivariate copulae simply as copulae.
\begin{definition}
A copula is any function $C:\left[ 0,1\right] \times \left[
0,1\right] \rightarrow \left[ 0,1\right] $ such that:

\begin{itemize}
\item $\forall \;\; u,v\in \left[ 0,1\right]$, $C$ satisfies the boundary
conditions%
\[
C\left( u,0\right) =0,\quad C\left( 0,v\right) =0\quad C\left( u,1\right)
=u\quad C\left( 1,v\right) =v 
\]

\item $C$ is 2-increasing, i.e. $\forall $ $u,v,u^{\prime },v^{\prime }\in %
\left[ 0,1\right] $ such that $u\leq u^{\prime },v\leq v^{\prime }:$%
\[
V_{C}\left( \left[ u,u^{\prime }\right] \times \left[ v,v^{\prime }\right]
\right) :=C\left( u,u^{\prime }\right) -C\left( u^{\prime },v\right)
-C\left( u,v^{\prime }\right) +C\left( u,v\right) \geq 0. 
\]
\end{itemize}
\end{definition}

\begin{remark} Note that any copula is, by definition, a joint distribution over the unit square.
\end{remark}

%The fundamental Sklar's theorem (cf. \cite{Nelsen}) relates joint and marginal distributions via copulae.
\begin{theorem}[Sklar] Let $F_{X,Y}$, $F_X$, $F_Y$ be the distributions defined above. Then 
there exists a copula $C$ such that for all $(x,y) \in [l_X, r_X]\times[l_Y, r_Y]$, 
\begin{equation}
F_{X,Y}\left( x,y\right) = C\left( F_{X}\left( x\right) ,F_{Y}\left( y\right)\right).  
\label{Sklar_eq}
\end{equation}
If $F_X$ and $F_Y$ are continuous, $C$ is unique; otherwise it is uniquely 
determined on $\Ran F_X \times \Ran F_Y$. 
Conversely, if $C$ is a copula and $F_X$, $F_Y$ are distribution functions, 
then the function $F_{X,Y}$ defined by \eqref{Sklar_eq} is a joint distribution 
function with margins $F_X$ and $F_Y$.
\label{Sklar_thm}
\end{theorem}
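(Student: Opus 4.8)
The plan is to handle the two directions separately, starting with the easier converse. Given a copula $C$ and margins $F_X$, $F_Y$, I would set $H(x,y) := C(F_X(x), F_Y(y))$ and verify directly that $H$ is a bivariate distribution function with the prescribed margins. The margin check is immediate from the boundary conditions on $C$: since $F_Y(r_Y)=1$ we get $H(x, r_Y) = C(F_X(x), 1) = F_X(x)$, and symmetrically $H(r_X, y) = F_Y(y)$, while $C(u,0) = C(0,v) = 0$ forces $H$ to vanish at the lower corners. The crucial property, 2-increasingness, follows because for $x_1 \le x_2$ and $y_1 \le y_2$ the $H$-volume of the rectangle $[x_1,x_2]\times[y_1,y_2]$ equals the $C$-volume $V_C([F_X(x_1),F_X(x_2)]\times[F_Y(y_1),F_Y(y_2)])$, which is nonnegative since $F_X$, $F_Y$ are nondecreasing and $C$ is 2-increasing.

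For the forward direction I would first construct a \emph{subcopula}: define $C'$ on $\Ran F_X \times \Ran F_Y$ by $C'(F_X(x), F_Y(y)) := F_{X,Y}(x,y)$. The key preliminary step is showing this is well defined, which rests on the Fr\'echet-type bound $\abs{F_{X,Y}(x_2,y) - F_{X,Y}(x_1,y)} \le \abs{F_X(x_2) - F_X(x_1)}$ for $x_1 \le x_2$; this holds because the event $\{X \le x_2, Y \le y\} \setminus \{X \le x_1, Y \le y\}$ is contained in $\{x_1 < X \le x_2\}$. Hence whenever two arguments $x_1, x_2$ share the same value of $F_X$ the two values of $F_{X,Y}$ coincide, so $C'$ genuinely depends only on $F_X(x)$ and $F_Y(y)$. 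The boundary conditions and 2-increasingness of $C'$ on its domain are then inherited directly from the corresponding properties of $F_{X,Y}$, and the same bound shows that $C'$ is Lipschitz in each argument.

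It remains to extend $C'$ from the closure $\overline{\Ran F_X} \times \overline{\Ran F_Y}$ to a genuine copula on the whole unit square, and this is the step I expect to be the main obstacle. The closures differ from $[0,1]$ only by at most countably many open intervals, one for each jump of a margin, and I would fill in each resulting gap by bilinear interpolation between the already-determined boundary values. The Lipschitz property established above is exactly what guarantees that the interpolated function remains 2-increasing and continues to satisfy the boundary conditions, so the extended $C$ is a copula satisfying $F_{X,Y}(x,y) = C(F_X(x), F_Y(y))$.

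Finally, uniqueness follows by construction: on $\Ran F_X \times \Ran F_Y$ any copula representing $F_{X,Y}$ must coincide with $C'$, so $C$ is pinned down there. When $F_X$ and $F_Y$ are continuous their ranges cover $(0,1)$ and are therefore dense in $[0,1]$, so the values of $C$ are determined on a dense set and, by continuity of $C$, everywhere; hence $C$ is unique. In the general case the only residual freedom lies in the interpolation over the gaps, which is precisely the assertion that $C$ is uniquely determined on $\Ran F_X \times \Ran F_Y$.
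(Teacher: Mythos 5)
The paper does not prove this statement: it is quoted as a classical result, with the proof deferred to the cited reference (Nelsen), so there is no in-paper argument to compare against. Your proposal is precisely the standard proof from that reference --- direct verification of 2-increasingness and margins for the converse, and subcopula construction, the Lipschitz bound $\abs{F_{X,Y}(x_2,y)-F_{X,Y}(x_1,y)}\leq\abs{F_X(x_2)-F_X(x_1)}$ for well-definedness, and bilinear interpolation for the extension --- and it is correct in outline. One small imprecision: the 2-increasingness of the bilinearly interpolated extension is not a consequence of the Lipschitz property alone; it follows from a direct computation that decomposes an arbitrary rectangle into subrectangles with corners in $\overline{\Ran F_X}\times\overline{\Ran F_Y}$ and invokes the 2-increasingness of the subcopula there, while the Lipschitz bound is what you need to pass from $\Ran F_X\times\Ran F_Y$ to its closure and to preserve the boundary conditions in the limit.
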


Thus a copula is a function that combines, or ``couples", two margins to return a joint 
distribution. 

\begin{definition}

The r.v.s $U=F_{X}\left(X\right) $ and $V=F_{Y}\left( Y\right)$ are called uniformized 
r.v.s associated to $X$ and $Y$, respectively.
\end{definition}

\begin{definition}
The \emph{density} of a copula $C$ is the function $c$ defined by:
\begin{equation}
c\left( u,v\right) :=\frac{\partial ^{2}C\left( u,v\right) }{\partial u\partial v},  
\label{copula_density}
\end{equation}%
if the derivative exists. 
\end{definition}
Copula densities are probability density functions over the unit square.

\begin{theorem}\label{increasing}
Let $c_{X,Y}\left( u,v\right) $ be a copula density between the r.v.s $X$ and $Y$, 
and let $\alpha(\cdot) $ and $\beta(\cdot) $ be strictly monotone functions on 
$\Ran X$ and $\Ran Y$ respectively. Then \[c_{\alpha
\left( X\right) ,\beta \left( Y\right) }\left( u,v\right) =c_{X,Y}\left(
u,v\right). \]
If $\alpha(\cdot) $ and $\beta(\cdot) $ are strictly increasing then also $C_{\alpha
\left( X\right) ,\beta \left( Y\right) }\left( u,v\right) =C_{X,Y}\left(
u,v\right)$.
\label{monotone_transf}
\end{theorem}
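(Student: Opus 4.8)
The plan is to reduce the whole statement to the behavior of the uniformized pair $(U,V)=(F_X(X),F_Y(Y))$. By Sklar's theorem (Theorem~\ref{Sklar_thm}) the copula $C_{X,Y}$ is exactly the joint distribution function of $(U,V)$ on the unit square, and its density $c_{X,Y}$ is the corresponding joint density; hence it suffices to track how $U$ and $V$ transform when $X,Y$ are replaced by $\alpha(X),\beta(Y)$. First I would record how the marginal distribution functions change under a strictly monotone map: if $\alpha$ is increasing then $F_{\alpha(X)}(t)=F_X(\alpha^{-1}(t))$, while if $\alpha$ is decreasing then $F_{\alpha(X)}(t)=1-F_X(\alpha^{-1}(t))$, the latter using continuity of $F_X$ to discard the left-limit correction; the analogous formulas hold for $\beta$ and $F_Y$.

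For the increasing case I would substitute $t=\alpha(X)$ to obtain the new uniformized variable $U'=F_{\alpha(X)}(\alpha(X))=F_X(\alpha^{-1}(\alpha(X)))=F_X(X)=U$, and similarly $V'=V$. Thus $(U',V')$ has exactly the same joint law as $(U,V)$, so $C_{\alpha(X),\beta(Y)}=C_{X,Y}$ is immediate from the definition of the copula as the distribution of the uniformized variables, and differentiating in $u$ and $v$ gives $c_{\alpha(X),\beta(Y)}=c_{X,Y}$. This settles the second assertion and, in particular, the density identity when both maps are increasing.

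For the density identity under general strictly monotone maps the crucial point is the behavior under a decreasing map, where the same substitution now yields $U'=1-F_X(X)=1-U$ (and $V'=1-V$ if $\beta$ decreases). The induced change of variables on $[0,1]^2$ is then one of the reflections $(u,v)\mapsto(1-u,v),(u,1-v),(1-u,1-v)$, each an involution whose Jacobian has modulus one, so the joint density is carried to itself in value. Equivalently, writing $c_{X,Y}$ as the ratio of the joint density to the product of the marginal densities and applying the change-of-variables formula to $f_{\alpha(X),\beta(Y)}$, the factors $\abs{(\alpha^{-1})'}$ and $\abs{(\beta^{-1})'}$ enter once in the numerator and once in the denominator and cancel exactly; this cancellation, rather than any special property of $\alpha,\beta$, is what leaves the copula density invariant.

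I expect the delicate step to be the bookkeeping in the decreasing case: one must check that the reflection $u\mapsto 1-u$ is tracked correctly, so that the density is recovered at the reflected argument, and that continuity of the margins is genuinely used when writing $F_{\alpha(X)}(\alpha(x))=1-F_X(x)$ without a spurious jump term. The increasing case, by contrast, is essentially immediate once the invariance $U'=U$ is observed, and the existence of $c_{\alpha(X),\beta(Y)}$ is inherited from the assumed existence of $c_{X,Y}$.
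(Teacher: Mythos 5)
Your treatment of the increasing case is correct and is the standard route (note the paper itself gives no proof of this theorem, deferring to Nelsen): $F_{\alpha(X)}(\alpha(X))=F_X(X)$ almost surely, so the uniformized pair $(U,V)$, hence its joint law $C$ and density $c$, are unchanged. That is all that is needed for the paper's later use in Theorem~\ref{primo}, where the spatial map has positive Jacobian.

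The decreasing case, however, contains a genuine gap --- and your own computation exposes it. As you correctly observe, a decreasing $\alpha$ sends $U$ to $1-U$, so the change of variables (equivalently, the cancellation of the $\abs{(\alpha^{-1})'}$ and $\abs{(\beta^{-1})'}$ factors) delivers the density of the transformed pair at $(u,v)$ as the original density at the \emph{reflected} point: $c_{\alpha(X),\beta(Y)}(u,v)=c_{X,Y}(1-u,v)$ when $\alpha$ decreases and $\beta$ increases, and analogously for the other sign patterns; these are exactly the differentiated forms of the classical formulas $C_{\alpha(X),Y}(u,v)=v-C_{X,Y}(1-u,v)$, etc. The sentence ``the joint density is carried to itself in value'' silently replaces $c_{X,Y}(1-u,v)$ by $c_{X,Y}(u,v)$, which is legitimate only for copulae with the corresponding reflection symmetry; your closing remark that ``the density is recovered at the reflected argument'' is the correct conclusion, and it contradicts the identity you set out to prove. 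Indeed the pointwise identity $c_{\alpha(X),\beta(Y)}(u,v)=c_{X,Y}(u,v)$ fails in general: for $(X,Y)$ bivariate Gaussian with correlation $\rho\neq 0$ and $\alpha(x)=-x$, $\beta(y)=y$, the pair $(-X,Y)$ carries the Gaussian copula with parameter $-\rho$, whose density differs from that with parameter $\rho$. So the defect here lies as much in the statement as in your argument: the honest output of your reflection bookkeeping is the reflected-argument identity, and only the both-increasing case yields invariance at the same argument $(u,v)$.
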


\begin{remark}
 If $X$ and $Y$ are continuous, $F_X$ and $F_Y$ 
are strictly increasing. Thus, by virtue of Theorem \ref{monotone_transf} the copula of $(X,Y)$ is also the copula of $(U,V)$. It represents also the joint distribution of $(U, V)$, because the margins are uniform on the unit interval. 
\end{remark}

The following result connects copulae and conditional probabilities (cf. \cite{darsow}):

\begin{theorem}\label{conditionalDistribution}
Let $X_i$, $i=1,2$, be two r.v.s with margins $F_i$. Then%
\begin{equation}
\partial _{i} C\left( F_1\left(x_1\right), F_2\left(x_2\right) \right) = 
P\left( X_j\leq x_j\left\vert X_i=x_i\right. \right), \quad i,j=1,2, \; i \neq j
\end{equation}
where $\partial_i$, $i=1,2$, denotes the partial derivative with respect to the $i$-th argument.
\end{theorem}

The derivatives of a copula with respect to one of the arguments, when existing, are bivariate conditional distributions over the unit square. When coupling r.v.s uniformly distributed in the unit interval, the derivatives also represent the conditional distribution function of one r.v. with respect to the other.

\subsection{Diffusion processes}

Let $\left\{ X_t \right\} _{t\in[t_0,T_{I})}$ be a one
dimensional diffusion process taking values in the interval $I=\left( l,r\right) $ with $%
-\infty \leq l<r\leq +\infty$. Its sample paths are solution of the SDE%
\begin{equation}
\begin{array}{rcl}
dX_t &=&\mu \left( X_t ,t\right) dt+\sigma \left(
X_t ,t\right) dW_t  \\
X_{t_0} &=& x_{0}
\label{SDE} 
\end{array}
\end{equation}
for any $t_{0}\leq t<T_{I}$. Here $T_{I}$ is the first exit time from $I$ and $W_{t}=\left\{ W_t \right\} _{t\geq t_0}$
is a standard Wiener process.
We assume classical conditions (cf. \cite{ikeda}) on the drift $\mu \left( x,t\right) $ and the diffusion coefficient $\sigma \left( x,t\right)>0$ to ensure existence and unicity of the solution.

Let%

\[
F_{t|s}\left( x\left\vert y\right. \right) := P\left( X_t <x\left\vert X_s =y\right. \right) 
\]
be the transition distribution of $X$ and let
\begin{equation}
f_{t|s}\left( x\left\vert y\right. \right) =\frac{\partial F_{t|s}\left( x\left\vert y\right. \right) }{\partial x}, \quad s<t,
\label{pdf}
\end{equation}%
be the corresponding transition pdf. Whenever it does not generate confusion, we drop the initial conditions from the notation of the marginal distributions, denoting them by $F_{t}\left( x\right)$ instead of $F_{t|t_{0}}(x|x_{0})$.

When the
boundaries $l,r$ of $I$ are natural, according to Feller's
classification (cf. \cite{Karlin}), the function \eqref{pdf} is the unique solution of the
Kolmogorov backward equation%
\begin{equation}
\frac{\partial p}{\partial s}+\mu \left( y, s\right) \frac{%
\partial p}{\partial y}+\frac{\sigma ^{2}\left( y, s\right) }{2}%
\frac{\partial ^{2}p}{\partial y^{2}}=0  
\label{Kolmogorov_back}
\end{equation}%
with the final condition%
\begin{equation}
\lim_{s\uparrow t}p\left( x,t\left\vert y,s\right. \right)
=\delta \left( x-y\right).   
\label{final_dens}
\end{equation}%
Here $\delta \left( \cdot \right) $ indicates the Dirac delta. Furthermore, the transition distribution $F_{t|s}\left( x\left\vert y\right. \right)$ 
is the unique solution of (\ref{Kolmogorov_back}) when (\ref{final_dens}) is replaced with 
\begin{equation*}
\lim_{s\uparrow t}F_{t|s}\left( x\left\vert y\right. \right) =%
\boldsymbol{1}_{[0,\infty)}\left( x-y \right),
%%\label{final_distr}
\end{equation*}%
where $\boldsymbol{1}_{[0,\infty)}(\cdot)$ denotes the Heaviside step function, i.e. the indicator function of the positive half-line.

The transition pdf $f$ also solves the Fokker Planck equation%
\begin{equation}
\frac{\partial p}{\partial t}+\frac{\partial }{\partial x}\left[ \mu \left(
x,t\right) p\right] -\frac{1}{2}\frac{\partial ^{2}}{\partial x^{2}}%
\left[ \sigma ^{2}\left( x,t\right) p\right] =0.
\label{Fokker_Plank}
\end{equation}%
When both boundaries are natural the solution corresponding to the initial condition
\begin{equation*}
\lim_{t\downarrow s}p\left( x,t\left\vert s,y\right. \right)
=\delta \left( x-y\right).
%%\label{initial_FP_dens}
\end{equation*}
is unique. Otherwise further boundary conditions should be added to guarantee the uniqueness of the solution of
\eqref{Kolmogorov_back} or \eqref{Fokker_Plank}. 

\subsubsection{Copulae for diffusion processes}
For fixed times $t>s>t_0$, $X_s$ and $X_t$ are continuous r.v.s.  We are interested in investigating the structure of their dependence using copulae. Copulae of Markov processes have been extensively studied in \cite{darsow}. Here we focus on the special case when the Markov process is a diffusion.
We denote by $F_{s,t}(x,y)= P\left( X_s\leq x, X_t\leq y \right)$ the joint distribution of $X_s$ and $X_t$. The copula between $X_s$ and $X_t$ evaluated in $(u,v) \in [0,1]\times[0,1]$ is
\[C_{s,t}(u,v)=F_{s,t}[F^{-1}_{s} (u),F^{-1}_{t}(v)]\]
the associated copula density is
\[c_{s,t}(u,v)= \frac{f_{s,t}(F^{-1}_{s} (u), F^{-1}_{t} (v))}{f_{s}(F^{-1}_{s} (u))f_{t}(F^{-1}_{t} (v))} =\frac{f_{t|s}\left(F^{-1}_{t} (v)| F^{-1}_{s} (u)\right) }{f_{t}\left( F^{-1}_{t}(v)\right)},\]
and
\[C_{s,t}(u,v)=\int_{0}^{v}\int_{0}^{u}c_{s,t}(w,z)\,dw\,dz.\] 

\section{Uniformized diffusion processes} \label{uniformized}

In the context of diffusion processes, the serial dependence of the observations is much more commonly investigated in terms of transition distribution than in terms of joint distributions. The parallel concepts in the language of copulae are introduced below. 
\begin{definition}
We call \emph{uniformized transition distribution} of a diffusion process $\left\{ X_t \right\} _{t>t_{0}}$
between times $s$ and $t$, $t_0 < s < t$, the function 
\begin{equation}
C_{t|s}(v|u):= \frac{\partial}{\partial u} C_{s,t}(u,v)=\int_{0}^{v}c_{s,t}(u,z)\,dz.
\label{pluto}\end{equation}
\end{definition}
The \emph{uniformized transition distribution} of a diffusion process retains all the information about the serial dependence carried by the transition distribution of $X$ irrespective of its marginal properties. Due to Theorem \ref{conditionalDistribution} the uniformized transition distribution is a distribution function with respect to the variable $v$ having support on $[0,1]$, i.e. a transition distribution. To give a clearer understanding of its meaning we introduce the concept of \emph{uniformized process}.

\begin{definition}
Given a diffusion process $\left\{ X_t \right\} _{t>t_{0}}$, for any choice of an initial time $t_{1}>t_{0}$ we define 
\emph{uniformized process} associated to $X$ the stochastic process $\left\{ \widetilde{X}_t \right\} _{t\geq t_{1}}$
given by
\[
\widetilde{X}_t :=F_{t}(X_{t}) \quad \forall t\geq t_{1}>t_0.
\]
\end{definition}

\begin{remark}
The initial time $t_{1}$ for $\widetilde{X}_t$ is set to any time that strictly follows the initial time $t_{0}$ of $X$ because for any $t>t_{0}$ the marginal distribution $F_{t}(X_{t})$ is continuos. Note that this is not the case for $t=t_{0}$, when the probability mass is concentrated at $x_{0}$, and that would make the trajectories of $\widetilde{X}_t$ discontinuous at $t_{0}$. If $X$ is initialized at random with a continuous initial density supported in $J \subset I$ the extension $\widetilde{X}_t$ to $t_{0}$ becomes immediate.
\end{remark}

The \emph{uniformized process} $\widetilde{X}$ associated to $X$ takes values in $[0,1]$ and has uniform marginal distribution at each time. The joint probability distribution of two observations $\left( \widetilde{X}_s, \widetilde{X}_t \right)$ with $s,t\geq t_{1}$ is
\begin{align}\widetilde{F}_{s,t}(u,v)&= P\left( \widetilde{X}_s\leq u, \widetilde{X}_t\leq v \right)= P\left(X_s\leq F^{-1}_{s} (u), X_t\leq F^{-1}_{t}(v) \right)\notag\\
&=F_{s,t}[F^{-1}_{s} (u),F^{-1}_{t}(v)]=C_{s,t}(u,v)\notag\end{align}
and it coincides with the copula of $\left(X_s,X_t\right)$. By Theorem \ref{conditionalDistribution}, the associated transition probability distribution is given by
\begin{align}\widetilde{F}_{t|s}(v|u)=P\left( \widetilde{X}_s\leq u| \widetilde{X}_t= v \right)= P\left(X_s\leq F^{-1}_{s} (u)| X_t= F^{-1}_{t}(v) \right)=C_{t|s}(v|u)\label{transition_uniformized}\end{align}
for any $u,v \in [0,1]$ and any $t\geq s \geq t_{1}>t_{0}$.
Moreover, since at any given time the uniformized process is obtained by a strictly increasing transformation of $X$, in light of Theorem \ref{monotone_transf} it retains the same copula function of the process $X$. To conclude, $X$ and $\widetilde X$ share the same copula function which is also the joint distribution of $\widetilde X$.

\begin{theorem}
The transition pdf $\widetilde f_{t|s}(v|u)$ of the uniformized process $\widetilde{X}$ coincides with the joint copula density $c_{s,t}(u,v)$ of the process, i.e.
\begin{equation}\widetilde f_{t|s}(v|u)=c_{s,t}(u,v)=\frac{f_{t|s}\left(F^{-1}_{t} (v)| F^{-1}_{s} (u)\right) }{f_{t}\left( F^{-1}_{t}(v)\right)}\label{TransitionCopulaDensity}
\end{equation}
for any $u,v \in [0,1]$ and any $t\geq s \geq t_{1}>t_{0}$.
\end{theorem}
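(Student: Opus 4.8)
The plan is to read the statement as the composition of two facts the preceding discussion has already assembled: the transition pdf of $\widetilde X$ is the $v$-derivative of its transition distribution, and that transition distribution is exactly the uniformized transition distribution $C_{t|s}(v|u)$, which by construction is a $u$-derivative of the copula $C_{s,t}$. Differentiating once more in $v$ then produces the mixed second derivative defining the copula density.

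First I would check that $\widetilde X$ is a process for which a transition pdf is meaningful. Since $X_t$ is continuous for every $t>t_0$, the marginal $F_t$ is a continuous strictly increasing bijection of $I$ onto $(0,1)$; hence $x\mapsto F_t(x)$ is invertible at each time, the uniformized process $\widetilde X_t=F_t(X_t)$ is a deterministic time-dependent monotone transformation of $X_t$, and it inherits the Markov property from $X$. Its transition distribution $\widetilde F_{t|s}(v|u)$ therefore admits the density $\widetilde f_{t|s}(v|u)=\partial_v\widetilde F_{t|s}(v|u)$ wherever the derivative exists. Invoking \eqref{transition_uniformized}, which identifies $\widetilde F_{t|s}(v|u)$ with $C_{t|s}(v|u)$, together with the definition \eqref{pluto}, $C_{t|s}(v|u)=\int_0^v c_{s,t}(u,z)\,dz$, and differentiating in $v$ via the fundamental theorem of calculus gives
\[
\widetilde f_{t|s}(v|u)=\frac{\partial}{\partial v}\int_0^v c_{s,t}(u,z)\,dz=c_{s,t}(u,v),
\]
which is the first equality of the theorem; the second equality is merely the expression for $c_{s,t}$ recorded in Section \ref{uniformized}. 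As an independent check I would redo the computation through the change-of-variables rule for conditional densities: conditioning on $\widetilde X_s=u$ amounts to conditioning on $X_s=F^{-1}_s(u)$, and transforming $X_t$ by $F_t$ multiplies its conditional density by the Jacobian $\frac{d}{dv}F^{-1}_t(v)=1/f_t(F^{-1}_t(v))$, reproducing $f_{t|s}(F^{-1}_t(v)\,|\,F^{-1}_s(u))/f_t(F^{-1}_t(v))$.

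The result is therefore essentially a restatement of the definitions, and I do not expect a genuine obstacle. The only point demanding care is the legitimacy of differentiating $\int_0^v c_{s,t}(u,z)\,dz$ termwise in $v$, that is, the continuity of $c_{s,t}(u,\cdot)$ so that the fundamental theorem of calculus applies pointwise rather than merely almost everywhere. This continuity is inherited from the regularity assumed on $\mu$ and $\sigma$, which guarantees a smooth transition pdf $f_{t|s}$ and hence a smooth copula density.
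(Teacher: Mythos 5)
Your argument is correct and follows the same route as the paper's own proof: differentiate $\widetilde F_{t|s}(v|u)$ in $v$, identify it with $C_{t|s}(v|u)$ via \eqref{transition_uniformized}, and apply \eqref{pluto}. The additional remarks on regularity and the change-of-variables cross-check are sound but not needed beyond what the paper assumes.
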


\begin{proof}
By definition $\widetilde f_{t|s}(v|u)=\frac{\partial}{\partial v}\widetilde{F}_{t|s}(v|u)$ and by equation \eqref{transition_uniformized} and equation \eqref{pluto}, the thesis follows.
\end{proof}

% OLD TEXT (instead of theorem above)
%Since we denote by $C_{t|s}(v|u)$ the uniformized transition distribution, it would be natural to denote by $c_{t|s}(v|u)$ its density. However  the marginal distribution of $\widetilde X$ is uniform, hence a trivial calculation shows that its transition density coincides with the copula (joint) density $c_{s,t}(u,v)$ of the process, i.e.
%\begin{equation}\widetilde f_{t|s}(v|u)=c_{s,t}(u,v)=c _{t|s}(v|u)=\frac{f_{t|s}\left(F^{-1}_{t} (v)| F^{-1}_{s} (u)\right) }{f_{t}\left( F^{-1}_{t}(v)\right)}\label{TransitionCopulaDensity}\end{equation}
%for any $u,v \in [0,1]$ and any $t\geq s \geq t_{1}>t_{0}$, and thus the notation $c _{t|s}(v|u)$ will not be used anymore.

\begin{theorem}
Let equation \eqref{SDE} admit a unique solution $\left\{ X_t \right\}
_{t\geq t_0}$, having diffusion interval $I=\left( l,r\right)$.
The uniformized process $\left\{ \widetilde{X}_t \right\}
_{t> t_1}$ associated to $\left\{ X_t \right\}
_{t\geq t_1}$ is an It\^o diffusion process, initialized at 
$\widetilde{X}\left( t_{1}\right) = F_{t_{1}}\left( X_{t_{1}} \right)$ with a uniform distribution. 
Its drift $\widetilde{\mu}$ and diffusion 
coefficient $\widetilde{\sigma}$ are given by
\begin{equation}
\begin{array}{rcl}
\widetilde{\mu}(u,s)  &=& \left. \frac{\partial F_s\left(
x\right) }{\partial s}\right\vert _{x=F_s^{-1}\left( u \right) }+
\mu \left( F_{s\,}^{-1}\left( u \right) ,s\right) f_{s}\left( 
F_{s\,}^{-1}\left( u \right) \right) + \qquad \\
&& \hfill +\frac{1}{2}\sigma ^{2}\left( F_{s\,}^{-1}\left( u \right) ,s\right) 
f_{s\,}^{\prime }\left( F_{s\,}^{-1}\left( u \right) \right), \\[6pt]

\widetilde{\sigma}(u,s) &=& \sigma \left( F_{s\,}^{-1}\left( u \right)
,s\right) f_s\left( F_{s\,}^{-1}\left( u \right) ,s\right),
\end{array}
\label{uniformized_coefficients}
\end{equation}
where $f'_{s}\left( x\right)= \frac{\partial f_{s}\left( x\right) }{\partial x} =\frac{\partial^{2} F_{s}\left( x\right) }{\partial x^{2}}$.
\label{SDEuniformized_thm}
\end{theorem}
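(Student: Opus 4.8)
The plan is to recognize the uniformized process as a smooth, time-dependent transformation of $X$ and then to apply the It\^o formula. Writing $g(x,s):=F_s(x)$, we have $\widetilde X_s=g(X_s,s)$, so the entire statement reduces to computing the drift and diffusion coefficient of the image process and matching them with \eqref{uniformized_coefficients}.

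First I would record the regularity facts needed. Since $\sigma>0$ on the open interval $I$, the transition density $f_s$ is strictly positive there, so $x\mapsto F_s(x)$ is strictly increasing and admits the inverse $F_s^{-1}$ used throughout; this is what makes $g(\cdot,s)$ a diffeomorphism of $I$ onto $(0,1)$ for each fixed $s>t_0$. The classical conditions on $\mu,\sigma$ ensure that $f_s(x)$ is differentiable in $s$ and twice differentiable in $x$, so that $g\in C^{1,2}$ with $\partial_s g=\partial_s F_s(x)$, $\partial_x g=f_s(x)$ and $\partial_{xx}g=f_s'(x)$; these are exactly the three derivatives that enter the It\^o correction.

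Second, I would apply the It\^o formula to $\widetilde X_s=g(X_s,s)$ along the solution of \eqref{SDE}, obtaining
\[
d\widetilde X_s=\Bigl(\partial_s g+\mu\,\partial_x g+\tfrac12\sigma^2\,\partial_{xx}g\Bigr)(X_s,s)\,ds+\bigl(\sigma\,\partial_x g\bigr)(X_s,s)\,dW_s.
\]
Inserting the three derivatives above yields a drift and a diffusion coefficient that are explicit functions of $X_s$ and $s$. To express them through the new state variable $u$, I would substitute $X_s=F_s^{-1}(u)$ wherever $u=\widetilde X_s$ occurs; this substitution is licit precisely because $F_s$ is invertible, and it reproduces $\widetilde\mu(u,s)$ and $\widetilde\sigma(u,s)$ of \eqref{uniformized_coefficients} verbatim. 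Because these coefficients depend on the trajectory only through the current pair $(\widetilde X_s,s)$, the image process is again a (time-inhomogeneous) It\^o diffusion.

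Finally, for the initial law I would invoke the probability integral transform: for every $t_1>t_0$ the variable $X_{t_1}$ is continuous, hence $\widetilde X_{t_1}=F_{t_1}(X_{t_1})$ is uniform on $[0,1]$, giving the stated initialization. The one genuinely delicate point is the regularity justifying the It\^o expansion, namely that $F_s(x)$ is $C^1$ in time and $C^2$ in space; I expect this to be the main obstacle, and I would discharge it by appealing to the smoothness of the transition density guaranteed by the classical conditions on $\mu$ and $\sigma$ (equivalently, to the fact that $f_s$ solves the Fokker--Planck equation \eqref{Fokker_Plank}), which simultaneously ensures that $\partial_s F_s$ exists and is finite.
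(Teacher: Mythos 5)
Your proposal is correct and follows essentially the same route as the paper: apply It\^o's formula to $F_t(X_t)$ and then substitute $F_t^{-1}(\widetilde X_t)$ for $X_t$ to read off the coefficients. The extra care you take with the regularity of $F_s$ in $s$ and $x$ and the explicit appeal to the probability integral transform for the initial law are points the paper leaves implicit, but they do not change the argument.
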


\begin{proof}
For any $t\geq t_{1}>t_{0}$ It\^o's formula yields%
\begin{multline*}
dF_t\left( X_t \right) =\left[ \frac{\partial F_{t\,}\left( X_t\right) }{\partial t}+\mu \left(
X_t ,t\right) f_{t}\left( X_t \right) +\frac{1}{2} \sigma ^{2}\left( X_t ,t\right) 
f_t^{\prime}\left( X_t \right) \right] dt + \\
 +\sigma \left( X_t ,t \right) f_{t}\left( X_t \right) dW_t
\end{multline*}
%\label{Ito3} 

where $f_{t\,}^{\prime }\left( X_t ,t\right) =\left. \frac{%
\partial f_{t\,}\left( x\right) }{\partial x}\right\vert _{x=X_t }$. Substituting $F_t^{-1}\left( \widetilde{X}_t
\right) $ to $X_t $ in the last equality gives the thesis.
\end{proof}

\begin{remark} Diffusion processes sharing the same uniformized process constituite a class characterized by a unique transition copula.
\end{remark}

\begin{theorem}\label{kolm}
Let $\left\{ X_t \right\}
_{t\geq t_0}$ be a time-homogeneous diffusion process, having drift $\mu(x)$, diffusion coefficient $\sigma(x)$ and diffusion interval $I=\left( l,r\right)$ where $r$ and $l$ are natural boundaries. Let $F_{t}\left( x\right)$ be its marginal distribution. 
The uniformized transition distribution $C_{t|s}(v|u) $, for $t >s >t_{0}$, is the unique solution 
of the Kolmogorov backward equation%
\begin{equation}
\frac{\partial }{\partial s} C_{t|s}(v|u) + \widetilde{\mu}\left( F_{s}^{-1}\left( u\right) \right) 
\frac{\partial }{\partial u} C_{t|s}\left(v\left\vert u \right. \right) + \frac{1}{2}\widetilde{\sigma}^{2}\left( F_{s}^{-1}\left( u\right)\right)
\frac{\partial ^{2}}{\partial u^{2}}C_{t|s}(v|u) = 0,
\label{KolmogorovCopula}
\end{equation}
where $\widetilde{\mu}$ and $\widetilde{\sigma}$ are given by \eqref{uniformized_coefficients}, 
with the final condition%
\begin{equation}
\lim_{s\uparrow t }C_{t|s}\left( v \left\vert u\right. \right) =%
\boldsymbol{1}_{[0,\infty)}\left( v-u\right).  
\label{initialCopula}
\end{equation}
\label{Kolmogorov_uniformized}

\end{theorem}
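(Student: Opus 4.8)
The plan is to observe that all the analytic ingredients are already in place. By Theorem~\ref{SDEuniformized_thm} the uniformized process $\{\widetilde X_t\}_{t>t_1}$ is itself an It\^o diffusion whose drift $\widetilde\mu$ and diffusion coefficient $\widetilde\sigma$ are given by \eqref{uniformized_coefficients}, and by \eqref{transition_uniformized} its transition distribution is precisely $\widetilde F_{t|s}(v|u)=C_{t|s}(v|u)$. Hence the assertion reduces to the standard fact, recalled after \eqref{final_dens}, that the transition distribution of a diffusion with natural boundaries is the unique solution of the associated Kolmogorov backward equation under the Heaviside final condition. The proof is therefore mostly a matter of specializing that general statement to $\widetilde X$.

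First I would write the backward equation \eqref{Kolmogorov_back} for the diffusion $\widetilde X$: replacing the coefficients $(\mu,\sigma)$ by the uniformized ones from \eqref{uniformized_coefficients}, taking $u$ as the backward state variable, and letting the operator act on the transition distribution $\widetilde F_{t|s}(v|u)$ rather than on a density (this is licit, as noted after \eqref{final_dens}), gives exactly \eqref{KolmogorovCopula} once $\widetilde F_{t|s}=C_{t|s}$ is substituted. Next I would pin down the final condition: the transition distribution of any diffusion tends, as $s\uparrow t$, to the Heaviside step in the difference of its two state arguments; applied to $\widetilde X$ this is $\lim_{s\uparrow t}C_{t|s}(v|u)=\mathbf 1_{[0,\infty)}(v-u)$, i.e.\ \eqref{initialCopula}. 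These two steps are essentially bookkeeping built on results already established.

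The genuine obstacle is uniqueness, since the uniqueness statement recalled after \eqref{final_dens} presupposes that the boundaries of the diffusion interval are natural, and here the relevant interval is $(0,1)$, the state space of $\widetilde X$. I would therefore need to show that $0$ and $1$ are natural boundaries for $\widetilde X$, given that $l$ and $r$ are natural for $X$. The plan is to exploit that, at each fixed time, $x\mapsto F_s(x)$ is a strictly increasing smooth bijection of $(l,r)$ onto $(0,1)$, and that Feller's boundary classification is invariant under such a relabeling of the state: concretely I would transform the scale and speed measures of $X$ into those of $\widetilde X$ and verify that the integral criteria characterizing a natural boundary are preserved, a deterministic monotone change of variable being unable to alter accessibility or the convergence of those integrals. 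A point to keep in mind throughout is that, although $X$ is time-homogeneous, $\widetilde X$ is generally time-inhomogeneous through the $s$-dependence of $F_s$, so both the backward equation and the uniqueness result must be used in their time-inhomogeneous form --- which is exactly the form in which they are stated in the background section.
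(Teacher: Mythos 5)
Your proposal is correct and follows essentially the same route as the paper: identify $C_{t|s}$ with the transition distribution of the uniformized diffusion via \eqref{transition_uniformized}, invoke Theorem \ref{SDEuniformized_thm} for the coefficients, and appeal to the standard backward-equation characterization, checking that the boundaries $0,1$ remain natural under the increasing map $F_s$. If anything, you are slightly more careful than the paper on the boundary-classification step (the paper merely asserts invariance under the monotone transformation, while you propose verifying it through the scale and speed measures) and you obtain the final condition by citing the Heaviside limit for transition distributions rather than by computing $\lim_{s\uparrow t}C_{s,t}(u,v)=\min(u,v)$ and differentiating, but these are equivalent given the identification $C_{t|s}=\widetilde F_{t|s}$.
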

\begin{proof}
Let us first consider the final condition. We can rewrite its left hand side as
\[
\lim_{s\uparrow t }C_{t \vert s}\left( v\vert u \right) = 
\lim_{s\uparrow t }\frac{\partial}{\partial u}C_{s,t}\left( u; v \right)=
\frac{\partial}{\partial u} \left( \lim_{s\uparrow t } C_{s,t}\left( u; v \right) \right).
\]
Note that the last equality holds because the derivative is performed with respect to 
$u$ and the limit operates on $s$. Furthermore,

\begin{eqnarray*}
\lim_{s\uparrow t } C\left( u,s; v,t \right) &=& \lim_{s\uparrow t } 
F_{X,Y}\left( F_s^{-1}\left( u\right) ,F_{t}^{-1}\left( v\right) \right) \\
&=&F_{s}\left( \min \left( F_{s}^{-1}\left( u\right) ,F_{s }^{-1}\left(
v\right) \right) \right) \\
&=&F_{s}\left( F_{s}^{-1}\left( \min \left( u,v\right) \right) \right) =\min
\left( u,v\right).
\end{eqnarray*}%
Note that the transformation $F_t$ maps the boundary points $l$l and $r$ for $X$ into the boundary points $0$ and $1$ for $\tilde X$, repsectively. $F_t$ is increasing and therefore does not alter the nature of the boundaries, which are natural for $X$ if and only if they are natural for $\tilde X$.

As far as the Kolmogorov backward equation is concerned we note that Eq. \eqref{KolmogorovCopula} follows from equation \eqref{conditionalDistribution} and Theorem \ref{SDEuniformized_thm}. Indeed,
Equation \eqref{transition_uniformized} ensures that $C_{t|s}(u|v) $ is
the transition distribution of the uniformized process. Theorem \ref{SDEuniformized_thm} guarantees that the uniformized process is an It\^o diffusion, and provides its infinitesimal coefficients. 
Hence its transition
distribution must be solution of the Kolmogorov backward equation having the same infinitesimal coefficients.
\end{proof}

\begin{remark}[\emph{Reflecting boundaries}] In Theorem \ref{kolm} we assumed that the boundaries $l,r$ are natural. However, the uniformized transition distribution $C_{t|s}(v|u) $ is still the unique solution of the Kolmogorov backward equation \eqref{KolmogorovCopula} with final condition \eqref{initialCopula} and reflection condition $\frac{\partial}{\partial u} C_{t|s}(v|u)|_{u=0,1}=0$ if one or both the boundaries are regular and the transition distribution $F_{t|s}(y|x)$ fulfills a reflection condition $\frac{\partial}{\partial x} F_{t|s}(y|x)|_{x\in \{r,l\} }=0$ at the regular boundary. Indeed $\frac{\partial}{\partial u} C_{t|s}(v|u)$ can be calculated directly, and by \eqref{TransitionCopulaDensity} and \eqref{transition_uniformized} it is easy to prove that $\frac{\partial}{\partial u} C_{t|s}(v|u)|_{u \in \{0,1\} }=0$ if and only if $\frac{\partial}{\partial x} F_{t|s}(y|x)|_{x \in \{r,l\} }=0$.
\end{remark}

\begin{remark} 
The transformation $F_t(\cdot)$ used to transform $X$ into $\tilde{X}$ depends on $x_0$ and $t_0$. Hence the drift and diffusion coefficient of $\tilde{X}$ depend on $x_0$ and $t_0$ as well. This might be misleadingly interpreted as an indication that the Markov property does not hold for $\widetilde{X}$. 
However $x_0$ and $t_0$ are parameters of $F_t(\cdot)$: changing either $x_0$ or $t_0$ affects both the initial conditions and the transformation.  As a consequence, for each $x_0$ and $t_0$ we get a different uniformized process $\widetilde{X}$, characterized by its diffusion equations. Each process that can be obtained from $X$ through $F_t(\cdot)$ verifies the Markov property.
%Indeed, It\^o's formula holds for \eqref{uniformized} and $\widetilde{X}$ is a well-defined diffusion process.
\label{remarkark_markov}
\end{remark}

Equations \eqref{uniformized_coefficients} reveal that generally the uniformized process is not time homogeneous, 
even if such is the original process. The following result ensures that in the stationary regime the uniformized process becomes time homogeneous again.
\begin{corollary}
Let $\left\{ X_t \right\} _{t\geq t_0}$ be a time homogeneous process, 
admitting steady state density $g(x)$ and steady state distribution $G(x)$.
Asympotitcally, when the distribution of $X_t$ coincides with $G(x)$, 
the uniformized process $\widetilde{X}_t$, $t>s$, is a time-homogeneous 
diffusion process, having infinitesimal coefficients%
\begin{equation}
\begin{array}{l}
\widetilde{\mu}(u,s) = \widetilde{\mu}(u) = \mu \left( G^{-1}\left( u\right) \right) 
g \left( G ^{-1}\left( u\right) \right) + \frac{1}{2}\sigma ^{2} \left( G^{-1}
\left( u\right) \right) g^\prime \left(G ^{-1}\left( u \right) \right), \\[6pt]
\widetilde{\sigma}(u,s) = \widetilde{\sigma}(u) = \sigma \left(
G ^{-1}\left( u\right) \right) g \left( G ^{-1}\left( u\right) \right),
\end{array}
\label{coeff_steadyunif}
\end{equation}%
\label{uniformized_steadystate_coefficients}
where $g^\prime(z) := dg(z)/dz$.
\end{corollary}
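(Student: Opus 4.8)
The plan is to derive the corollary directly from Theorem \ref{SDEuniformized_thm} by specializing the general formulas \eqref{uniformized_coefficients} to the stationary regime. The starting observation is that for a time-homogeneous process admitting the steady state distribution $G$, once $X_t$ has reached its stationary law the marginal distribution $F_s$ no longer depends on $s$ but equals $G$ for every $s$, with density $f_s = g$ and derivative $f_s' = g'$; consequently $F_s^{-1} = G^{-1}$ as well. This is the single structural fact that drives the entire argument, and everything else is substitution.

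First I would examine the drift formula in \eqref{uniformized_coefficients}. Its first summand, $\left.\partial F_s(x)/\partial s\right\vert_{x=F_s^{-1}(u)}$, measures the rate at which the marginal distribution drifts in time; in the stationary regime $F_s = G$ is independent of $s$, so this term vanishes identically. The remaining two summands are then obtained by substituting the time-homogeneous coefficients $\mu(x,s) = \mu(x)$ and $\sigma(x,s) = \sigma(x)$ together with $F_s^{-1}(u) = G^{-1}(u)$, $f_s = g$ and $f_s' = g'$, which reproduces the expression for $\widetilde{\mu}(u)$ in \eqref{coeff_steadyunif}. Since no $s$-dependence survives, the drift is time-homogeneous.

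Next I would treat the diffusion coefficient in the same way: substituting $\sigma(x,s) = \sigma(x)$, $f_s = g$ and $F_s^{-1} = G^{-1}$ into the second line of \eqref{uniformized_coefficients} immediately gives the $\widetilde{\sigma}(u)$ of \eqref{coeff_steadyunif}, again independent of $s$. As both infinitesimal coefficients of the It\^o diffusion $\widetilde{X}$ are now functions of $u$ alone, the uniformized process is time-homogeneous, which is the thesis. Equivalently, one may re-run the It\^o expansion of $G(X_t)$ underlying Theorem \ref{SDEuniformized_thm}: because $G$ carries no explicit $t$-dependence, the $\partial_t$ contribution is absent from the outset and the computation lands on \eqref{coeff_steadyunif} after setting $X_t = G^{-1}(\widetilde{X}_t)$.

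The main obstacle is interpretative rather than computational: one must justify that replacing the time-dependent marginal $F_s$ by the stationary $G$ is exact, so that the vanishing of $\partial_s F_s$ is genuine and not merely a limiting approximation. This relies on the standard fact that a time-homogeneous diffusion whose one-dimensional law has reached the invariant distribution keeps $F_s \equiv G$ at all later times; under this hypothesis Theorem \ref{SDEuniformized_thm} applies verbatim with $G$ substituted for $F_s$, and the routine substitutions above complete the argument.
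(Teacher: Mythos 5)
Your proposal is correct and matches the paper's intent: the published proof is the single line ``analogous to that of Theorem \ref{SDEuniformized_thm}'', i.e.\ apply It\^o's formula to $G(X_t)$ with no explicit time dependence, which is exactly the alternative route you mention, while your primary route (specializing \eqref{uniformized_coefficients} by setting $F_s\equiv G$, so the $\partial_s F_s$ term vanishes) is the same computation packaged as a substitution. No gaps.
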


\begin{proof}
The proof is analogous to that of Theorem \ref{SDEuniformized_thm}.
\end{proof}

Corollary \ref{uniformized_steadystate_coefficients} ensures that the coefficients of the uniformized processes associated to  stationary diffusions admit (finite) limits for $t\rightarrow \infty$.

\section{Copulae and space-time transformations}\label{CandSTT}

Already in 1931, Kolmogorov \cite{Kolmogorov} has shown that monotone STTs of the type
\begin{equation}\begin{cases} 
y =\psi (t,x)\\
\tau =\varphi (t),
\end{cases}
\label{spacetime}
\end{equation}
with Jacobian $J(t,x)=\frac{\partial \psi (t,x)}{\partial x}>0$ for every $x\in I$ and $\varphi(t)$ non-decreasing, map diffusion processes into diffusion processes (the case $J(t,x)<0$ is analogous). It is also well known (cf \cite{bluman}) that the only transformations mapping the Kolmogorov backward equation of a diffusion process into the Kolmogorov backward equation of a different diffusion process have the same structure of \eqref{spacetime}.
If an invertible transformation relates the two diffusion processes $X_{t}$ and $Y_{\varphi (t)}=\psi (t,X_{t})$ and the transition pdf of $X_{t}$ is known, the one of $Y_{\tau}$ can be expressed (as long as probability mass is conserved) by
\begin{equation}
f_{\tau|\tau_{0}}^{Y}(y|y_{0})=\frac{f_{t|t_{0}}^{X}(x|x_{0})} {J(t,x)}.
\end{equation}

In \cite{intoWiener} and \cite{intoFeller} necessary and sufficient conditions for the existence of such transformations are given when $X_t$ is a Wiener or a Cox-Ingersoll-Ross, respectively. These conditions require the drift and diffucion coefficient of the original process to verify  an equation that has no immediate probabilistic interpretation and hides the reasons why some processes can be transformed into others and some cannot.
As already mentioned in Section \ref{Section1}, one of the goals of the paper is to provide a direct probabilistic interpretation of transformability between diffusion processes. The following Theorems \ref{primo} and \ref{nonmonotone} accomplish this task by establishing the mathematical relation between copulae and {\STTs}.

\begin{theorem}\label{primo}
A diffusion process $X$ can be transformed into a diffusion process $Y$ via a monotone {\STT} \eqref{spacetime} if and only if they share the same copula density up to the time transformation, i.e.
\begin{equation}
c^{X}_{s,t}(u,v)=c^{Y}_{\varphi(s),\varphi(t)}(u,v),\label{samecopula}
\end{equation}
for any $t_{0}<s<t$ and $u,v\in [0,1]$.
\end{theorem}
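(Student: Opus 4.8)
The plan is to prove the two implications separately, exploiting the identification established in \eqref{TransitionCopulaDensity} of the copula density with the transition density of the uniformized process. For the \emph{necessity} part, suppose a monotone \STT\ $(\psi,\varphi)$ of the form \eqref{spacetime} carries $X$ into $Y$, so that $Y_{\varphi(t)}=\psi(t,X_t)$ with $\partial_x\psi>0$. First I would fix $t_0<s<t$ and note that, for each frozen time, $x\mapsto\psi(s,x)$ and $x\mapsto\psi(t,x)$ are strictly increasing because the Jacobian is positive; hence $Y_{\varphi(s)}$ and $Y_{\varphi(t)}$ are strictly increasing transformations of $X_s$ and $X_t$, respectively. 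Theorem \ref{monotone_transf} then applies verbatim with $\alpha=\psi(s,\cdot)$ and $\beta=\psi(t,\cdot)$, yielding $c^Y_{\varphi(s),\varphi(t)}=c^X_{s,t}$, which is exactly \eqref{samecopula}. This direction is essentially immediate once the correct monotone maps are recognized.

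For the \emph{sufficiency} part, suppose \eqref{samecopula} holds for some non-decreasing $\varphi$. By \eqref{TransitionCopulaDensity} the left-hand side is the transition density of the uniformized process $\widetilde X$ at $(s,t)$ while the right-hand side is that of $\widetilde Y$ at $(\varphi(s),\varphi(t))$, so the hypothesis says precisely that $\widetilde X_t$ and $\widetilde Y_{\varphi(t)}$ share the same two-point transition densities. Both carry uniform one-dimensional marginals at every instant and both are Markov (indeed diffusions, by Theorem \ref{SDEuniformized_thm}), and from this I would argue that equality of the transition laws together with equality of the one-dimensional marginals forces equality of all finite-dimensional distributions, i.e.\ $\widetilde X_t$ and $\widetilde Y_{\varphi(t)}$ coincide in law as processes. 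I would then define the spatial map by quantile coupling, $\psi(t,x):=(F^Y_{\varphi(t)})^{-1}(F^X_t(x))$, which is strictly increasing in $x$ for each $t>t_0$ (a composition of a strictly increasing continuous distribution function with a strictly increasing quantile function), so that $(\psi,\varphi)$ is a monotone \STT\ of the form \eqref{spacetime}. Applying it along the paths of $X$ gives $\psi(t,X_t)=(F^Y_{\varphi(t)})^{-1}(\widetilde X_t)$, and the equality in law $\widetilde X_t\overset{d}{=}\widetilde Y_{\varphi(t)}$ propagates through the deterministic time-dependent map $(F^Y_{\varphi(t)})^{-1}$ to give $\psi(t,X_t)\overset{d}{=}Y_{\varphi(t)}$; Kolmogorov's result then guarantees that the image is again a diffusion. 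This is exactly the assertion that $(\psi,\varphi)$ transforms $X$ into $Y$.

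I expect the crux to be the middle step of the sufficiency direction: upgrading the equality of the \emph{bivariate} copula densities (equivalently, of the two-point transition densities of the uniformized processes) to equality of the \emph{entire} laws of the uniformized processes. This is where the Markov property is indispensable, since for a Markov process the finite-dimensional distributions are reconstructed from the one-dimensional marginals and the two-point transitions through Chapman--Kolmogorov; without Markovianity, matching pairwise copulae would not pin down the joint law. The remaining ingredients --- the quantile-coupling definition of $\psi$, its strict monotonicity, and the push-forward of the equality in law --- are routine, relying only on the continuity and strict monotonicity of the marginals $F^X_t$ and $F^Y_\tau$ for $t>t_0$.
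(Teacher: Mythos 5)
Your proposal is correct and follows essentially the same route as the paper: necessity via Theorem \ref{increasing} applied to the strictly increasing spatial sections of $\psi$, and sufficiency by identifying the copula densities with the transition densities of the uniformized processes and then defining the transformation by quantile coupling, $\psi(t,x)=\bigl[F^{Y}_{\varphi(t)}\bigr]^{-1}\bigl(F^{X}_{t}(x)\bigr)$. The only difference is that you spell out explicitly the Chapman--Kolmogorov step upgrading equal two-point transitions plus equal marginals to equality of the full laws, which the paper leaves implicit.
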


\begin{proof}
If a monotone transformation maps $X$ into $Y$, Theorem \ref{increasing} implies \eqref{samecopula}.
On the other hand, if \eqref{samecopula} holds then the two uniformized processes $\widetilde X_{t}=F^{X}_{t}(X_{t})$ and $\widetilde Y_{\varphi(t)}=F^{Y}_{t}(Y_{t})$ have the same transition pdf and applying $[F^{Y}_{\varphi(t)}(\cdot)]^{-1}$ to $\widetilde X_{t}$ yields a process obeying the same law as $Y_{\varphi(t)}$.

Hence, the monotone {\STT}
\[\begin{cases}
y=\left[F^{Y}_{\varphi(t)}\right]^{-1}\hspace{-2mm} \left(F^{X}_{t}(x)\right)\\
\tau =\varphi (t),
\end{cases}
\]
maps $X_{t}$ into $Y_{\tau}$.

%\begin{diagram}
%\mathbb{R^{+}}&\rTo^{\mathbb{X}}&\mathbb{R^{+}}\times I &\rTo^{F^{X}} &\mathbb{R^{+}}\times [0,1]\\
%\dTo^{\varphi} & & \dTo_{(\varphi,\psi)}&&\dTo_{(\varphi,\text{\small id})}\\
%\mathbb{R^{+}}&\rTo^{\mathbb{Y}}&\mathbb{R^{+}}\times I & \rTo^{F^{Y}}&\mathbb{R^{+}}\times [0,1]
%\end{diagram}
%The maps $\mathbb{X},\mathbb{Y}, F^{X}, F^{Y}$ introduced in the diagram are defined as follows:  $\mathbb{X}(t)=(t, X_{t})$, $\mathbb{Y}(r)=(r, Y_{r})$, $F^{X}(t,x)=F^{X}_{t}(x)$, and $F^{Y}(r,y)=F^{Y}_{r}(y)$
\end{proof}
Theorem \ref{primo} highlights that monotone {\STTs} only affect the marginal distributions of a process while preserving the copula (saved for the stretching of the time axis). Two processes that can be mapped into each other by a monotone STT share the same dependence structure up to the time stretching.

\begin{remark}
The explicit expression of the copula density of a diffusion process whose transition is known and that can be mapped into a simpler diffusion by a monotone STT can be derived either using equation \eqref{TransitionCopulaDensity} or applying Theorem \ref{primo}. The latter approach is sometimes advantageous as we shall see explicitly in Remark \ref{ZZZ} and Remark \ref{UUU}
\end{remark}

In many occasions it is of interest to consider more general transformations than \eqref{spacetime}. In particular, relaxing the monotonicity assumptions of $\psi(t,\cdot)$ by piecewise monotonicity determines losing the invertibility of the transformations and also the possibility of applying Theorem \ref{increasing} directly, but broadens the class of processes that can be taken into account. Interesting examples are shown in Section \ref{ex}.

\begin{theorem}\label{nonmonotone}
Let $X$ be a diffusion process with known transition pdf and initial conditions. Let also $\psi:\mathbb{R}^+\times I\rightarrow \mathbb{R}$ be an {\STT} as in \eqref{spacetime} such that, for every $t$, $J_{t}(x)=\frac{\partial\psi(t,x)}{\partial x}$ exists and is not vanishing except for at most a countable number or points of the diffusion interval $I$.
Under the assumption of conservation of the probability mass, the diffusion $Y_{\varphi(t)}:=\psi (t,X_{t})$ admits for any $t_{0}<s<t$ copula density function
\begin{align}{}&c^{Y}_{\varphi(s),\varphi(t)}(u,v)=\label{tesi}\\
 &=\hspace{-6mm}\displaystyle \sum_{
\substack{x:\,\psi(t,x)=\left[F^{Y}_{\varphi(t)}\right]^{-1}\hspace{-1mm}  (v)\\
z:\,\psi(s,z)=\left[F^{Y}_{\varphi(s)}\right]^{-1}\hspace{-1mm}(u)}} \hspace{-7mm}w\left(s,z,\left[F^{Y}_{\varphi(s)}\right]^{-1}\hspace{-1mm} (u)\right)w\left(t,x,\left[F^{Y}_{\varphi(t)}\right]^{-1}\hspace{-1mm}  (v)\right) \;c^{X}_{s,t}\left(F_s^{X}(z), F_{t}^{X}(x)\right),\notag\end{align} 
where
\begin{equation}
w(s,z,q):=\frac{\displaystyle\frac{f^{X}_{s}(z)}{\abs{J_{s}(z)}}}{\displaystyle\sum_{a: \psi(s,a)=q} \frac{f^{X}_{s}(a)}{\abs{J_{s}(a)}}}.
\label{doppioW}
\end{equation}
\end{theorem}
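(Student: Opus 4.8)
The plan is to obtain the joint and marginal transition densities of $Y$ explicitly via the change-of-variables formula for piecewise monotone maps, and then to assemble the copula density from its definition recalled in Section \ref{uniformized}. Concretely, it suffices to express the three densities of $Y$ appearing in
\[
c^{Y}_{\varphi(s),\varphi(t)}(u,v)=\frac{f^{Y}_{\varphi(s),\varphi(t)}\bigl([F^{Y}_{\varphi(s)}]^{-1}(u),[F^{Y}_{\varphi(t)}]^{-1}(v)\bigr)}{f^{Y}_{\varphi(s)}\bigl([F^{Y}_{\varphi(s)}]^{-1}(u)\bigr)\,f^{Y}_{\varphi(t)}\bigl([F^{Y}_{\varphi(t)}]^{-1}(v)\bigr)}
\]
in terms of the corresponding densities of $X$ and of the Jacobian $J$.

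First I would compute the marginal density of $Y_{\varphi(t)}=\psi(t,X_{t})$. Since $\psi(t,\cdot)$ is piecewise monotone with $J_{t}$ non-vanishing off an at most countable (hence Lebesgue-null) set, the classical change-of-variables formula for non-injective transformations gives
\[
f^{Y}_{\varphi(t)}(y_{t})=\sum_{x:\,\psi(t,x)=y_{t}}\frac{f^{X}_{t}(x)}{\abs{J_{t}(x)}}.
\]
Viewing $\left(Y_{\varphi(s)},Y_{\varphi(t)}\right)$ as the image of $(X_{s},X_{t})$ under the product map $(z,x)\mapsto(\psi(s,z),\psi(t,x))$, whose Jacobian determinant factors as $J_{s}(z)\,J_{t}(x)$, the same argument in two dimensions yields
\[
f^{Y}_{\varphi(s),\varphi(t)}(y_{s},y_{t})=\sum_{\substack{z:\,\psi(s,z)=y_{s}\\x:\,\psi(t,x)=y_{t}}}\frac{f^{X}_{s,t}(z,x)}{\abs{J_{s}(z)}\,\abs{J_{t}(x)}}.
\]

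I would then substitute $y_{s}=[F^{Y}_{\varphi(s)}]^{-1}(u)$ and $y_{t}=[F^{Y}_{\varphi(t)}]^{-1}(v)$ into the copula density formula. In the numerator I replace the joint density of $X$ by inverting the copula density relation of Section \ref{uniformized}, namely $f^{X}_{s,t}(z,x)=c^{X}_{s,t}\bigl(F^{X}_{s}(z),F^{X}_{t}(x)\bigr)\,f^{X}_{s}(z)\,f^{X}_{t}(x)$. The denominator splits into the product of the two marginal sums, each of which is precisely the normalizing denominator in the definition \eqref{doppioW} of $w$. Regrouping the factors attached to the $s$- and $t$-preimages, every summand then carries $\dfrac{f^{X}_{s}(z)/\abs{J_{s}(z)}}{\sum_{a:\,\psi(s,a)=y_{s}}f^{X}_{s}(a)/\abs{J_{s}(a)}}=w(s,z,y_{s})$ together with the analogous factor $w(t,x,y_{t})$, which is exactly \eqref{tesi}.

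The main obstacle is the rigorous handling of the critical set together with the role of the mass-conservation hypothesis. The countable set where $J_{t}$ vanishes or fails to exist is Lebesgue-null and hence irrelevant to densities defined almost everywhere, but one must verify that the preimage sums are genuinely finite and exhaust the whole probability: conservation of probability mass is precisely the hypothesis guaranteeing that $Y$ is an honest diffusion whose transition density integrates to one, so that no mass leaks through the non-monotone map. A secondary point of care is the consistent bookkeeping of the absolute values $\abs{J_{s}}$, $\abs{J_{t}}$ on each monotone branch between the marginal and the joint change of variables, which is what makes the weights $w$ emerge with the correct normalization.
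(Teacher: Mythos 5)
Your proposal is correct and follows essentially the same route as the paper: both express the joint and marginal densities of $Y$ as preimage sums via the change-of-variables formula for piecewise monotone maps, substitute into the ratio defining the copula density, rewrite $f^{X}_{s,t}$ through $c^{X}_{s,t}$, and regroup to make the weights $w$ appear. The only cosmetic difference is that the paper first records the transformed transition pdf as an intermediate formula before assembling the same ratio you write down directly.
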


\begin{proof}
Let us start by deriving the transformation formula for the transition pdf under the hypothesis of conservation of the total probability mass.
\begin{align}
f^{Y}_{\varphi(t)|\varphi(s)}(y|q)&=\frac{f^Y_{\varphi(s),\varphi(t)}(q,y)}{f_{\varphi(s)}^Y (q)}=
\frac{\displaystyle\sum_{
\substack{x:\,\psi(t,x)=y\\
z:\,\psi(s,z)=q}}
\frac{f^{X}_{s,t}(z, x)}{\abs{J_{t}(x)J_{s}(z)}}}{\displaystyle\sum_{z: \psi(s,z)=q}\frac{ f^{X}_{s}(z)}{\abs{J_{s}(z)}}}=\notag \\
&=\displaystyle\sum_{z:\,\psi(s,z)=q} w(s, z,q) \sum_{x:\,\psi(t,x)=y} 
\frac{f^{X}_{t|s}(x|z)}{\abs{J_{t}(x)}},\label{questa}
\end{align}
where the weights $w(\cdot, \cdot,\cdot)$ are defined in $\eqref{doppioW}$.

Further applying the definition of copula density \eqref{TransitionCopulaDensity} and equation \eqref{questa}, we get

\begin{align}c^{Y}_{\varphi(s),\varphi(t)}(u,v)&=\frac{f^{Y}_{\varphi(s),\varphi(t)}\left(\left[F^{Y}_{\varphi(s)}\right]^{-1}\hspace{-1mm}  (u), \left[F^{Y}_{\varphi(t)}\right]^{-1}\hspace{-1mm} (v)\right)}{f^{Y}_{\varphi(s)}\left(\left[F^{Y}_{\varphi(s)}\right]^{-1}\hspace{-1mm}  (u)\right) f^{Y}_{\varphi(t)}\left(\left[F^{Y}_{\varphi(t)}\right]^{-1}\hspace{-1mm} (v)\right)}=\notag\\
 &=\frac{\displaystyle\sum_{
\substack{x:\,\psi(t,x)=\left[F^{Y}_{\varphi(t)}\right]^{-1}\hspace{-1mm} (v)\\
z:\,\psi(s,z)=\left[F^{Y}_{\varphi(s)}\right]^{-1}\hspace{-1mm} (u)}}\frac{f^{X}_{s,t}(z,x)} {\abs{J_s(z)J_t(x)}}}{\displaystyle\sum_{
\substack{x:\,\psi(t,x)=\left[F^{Y}_{\varphi(t)}\right]^{-1}\hspace{-1mm} (v)\\
z:\,\psi(s,z)=\left[F^{Y}_{\varphi(s)}\right]^{-1}\hspace{-1mm} (u)}}\frac{f^{X}_{s}(z)f^{X}_{t}(x)}{\abs{J_{s}(z)J_{t}(x)}}}.\label{orror}
\end{align}
After applying equation \eqref{TransitionCopulaDensity} again to get $c^{X}_{s,t}\left(F_s^{X}(z), F_{t}^{X}(x)\right)$ from the transition of $X$, equation \eqref{orror} is easily reorganized into equation \eqref{tesi}.
\end{proof}

\section{Modeling with copulae and transformations: some explicit expressions}\label{ex}

Diffusion models are often defined by assigning a specific marginal distribution and fixing the remaining degrees of freedom through different approaches (cf. Section \ref{Section1} for a detailed list of references). For instance, the authors of \cite{givencovariance} fix the autocovariance function in order to control the serial dependence of the process. In \cite{juliesorensenMM,nonnormalOU} a specific marginal distribution (heavy tailed or multimodal) is assigned by transforming the tractable stationary Ornstein-Uhlenbeck process via suitable {\STTs}. The novel results provided in Section \ref{CandSTT} make such approach more general and systematic. In this framework, the design of a diffusion model requires two steps: the choice of the copula of a diffusion process $X$ (regardless of its marginals) and the selection of the marginals. The choice of the copula is performed in accordance with the serial dependence of the phenomenon of interest among the copulae of tractable diffusions.  
The selection of new marginal distributions is also performed in accordance with the phenomenon to be modeled. Any continuous marginal distribution $F^{Z}_{t}(x)$ can be imposed by applying the transformation $Z_{t}=\left[F^{Z}_{t}\right]^{-1}\hspace{-1mm}\left(F^{X}_{t}(X_{t})\right)$, without altering the copula of the process. 

Whenever the transition pdf of $X$ is explicitly known, it is possible to simulate sample paths of $Z$ exactly by transforming those of $X$. Furthermore, it becomes possible to estimate the parameters of $Z$ by maximum likelihood.

In the next Subsections we present some examples of diffusion processes whose copula can be computed. We follow the classification by Kozlov \cite{kozlov} based on the dimensionality of the symmetry algebra of the corresponding SDE. Such dimensionality is preserved by {\STTs} and allows to group the most tractable diffusion models in two classes. The first class comprises those processes whose SDEs admit a 3-dimensional symmetry algebra. All such processes (cf. \cite{kozlov}) can be transformed into Brownian motions with or without boundary conditions. In Section \ref{Brelated} we present some models belonging to this class and  derive the corresponding copula densities. The second class contains SDE models whose symmetry algebra is 2-dimensional. They can be transformed into the CIR process with suitable boundary conditions. Section \ref{CIRrelated} introduces the copula of the CIR process and a few related models.
SDEs with $1$- or $0$-dimensional symmetry algebra are usually handled by numerical methods or by simulations and are not considered in the present paper.

\subsection{Brownian Motion and related processes}\label{Brelated}
The Brownian motion or Wiener process is a regular diffusion on the interval $(-\infty,\infty)$ with natural boundaries. Following equation \eqref{TransitionCopulaDensity} its copula density is
\begin{align}
c_{t|s}(v|u)&=\frac{\sqrt{t}}{\sqrt{t-s}}\frac{\phi \left(\frac{\sqrt{t}\Phi^{-1}(v)-\sqrt{s}\Phi^{-1}(u)}{\sqrt{t-s}}\right)}{\phi \left(\Phi^{-1}(v)\right)},\label{Bcopula}
\end{align}
which can be immediately recognized as a special instance of the Gaussian copula density (cf. \cite{nelsen}).
A necessary and sufficient condition for a process which solves the SDE \eqref{SDE} to be transformable into a Wiener process (and hence to admit a 3-dimensional symmetry algebra) is that its drift and diffusion coefficients $\mu(\cdot, \cdot)$ and $\sigma(\cdot, \cdot)$ satisfy the relation (cf. \cite{intoWiener,cherkasov, kozlov})
\[\frac{\mu(x,t)}{\sigma(x,t)}=\frac{\sigma_{x}(x,t)}{2}+\int\frac{\sigma_{t}(x,t)}{\sigma^{2}(x,t)}dx+c_{1}(t)\int\frac{dx}{\sigma(x,t)}+c_{2}(t)\]
where $c_{1}(t)$ and $c_{2}(t)$ are arbitrary functions of the time variable. If this condition is met, the {\STT} \eqref{spacetime} that realizes such mapping is
\[\begin{aligned}{}&\varphi(t)=\int_{t_{1}}^{t}\text{e}^{-2\int^{r}_{t_{0}} c_{1}(s)ds }dr\\
&\psi(t,x)=\sqrt{\varphi'(t)}\int_{x_0}^{x}\frac{dy}{\sigma(y,t)} +\int_{t_{2}}^{t}c_{2}(s)\sqrt{\varphi'(s)}ds\end{aligned}\]
where $x_{0}$, $t_{0}$, $t_{1}$, $t_{2}$ are arbitrary constants.

\subsubsection{Brownian motion with drift and geometric Brownian motion}
Brownian motion with drift $W_{t}=\mu t+\sigma B_{t}$ and Geometric Brownian motion $Y_{t}=\text{e}^{W_{t}}=\text{e}^{\mu t+ \sigma B_{t}}$ both result from purely spatial monotone transformations of a Brownian motion $B_{t}$. Hence, they both share the same Gaussian copula density as the Brownian motion. Importantly, this implies that the copula density does not depend on the drift and diffusion parameters $\mu$ and $\sigma$. 

Let us mention that while the transformation that maps $B_{t}$ into $W_{t}$ preserves the diffusion interval, the diffusion interval of $Y_{t}$ becomes $(0, \infty)$. Still, the nature of the boundaries according to Feller's classification remains unchanged (both boundaries are natural) since the transformation is monotone.

 \subsubsection{Ornstein-Uhlenbeck}
The Ornstein-Uhlenbeck process $X_{t}$ solves the SDE
 \begin{equation}dX_{t}=\left(-\alpha X_{t} +\beta\right) dt + \sigma d B_{t},\label{OUsde}\end{equation}
where $B_{t}$ is a standard Brownian motion and $\alpha, \, \beta \in \mathbb{R}$. The diffusion interval is $(-\infty,\infty)$ and both boundaries are natural. The process has a Gaussian transition pdf and also admits a Gaussian stationary distribution with mean $\frac{\beta}{\alpha}$ and variance $\frac{\sigma^{2}}{2\alpha}$ if $\alpha >0$. The coefficients of the copula process associated to $X_t$ can be derived from Eq. \eqref{coeff_steadyunif}. Equivalently, since a monotone {\STT} \eqref{spacetime} with $\varphi(t)=\frac{e^{2\alpha t}-1}{2\alpha}$, $\psi(t,x)=\frac{e^{\alpha t }}{\sigma}\left(x-\frac{\beta}{\alpha} \right)$ maps $X_t$ into a Brownian motion, the copula density is easily derived from Theorem \ref{primo}. According to \eqref{samecopula} it reads
%According to equation \eqref{samecopula},  the copula density is the same as \eqref{Bcopula} except for the time change $\varphi(t)$:
\begin{equation} 
c^{\text{OU}}_{t|s}(v|u)=\frac{\sqrt{\text{e}^{-2\alpha t}-1}}{\sqrt{\text{e}^{-2\alpha t}-\text{e}^{-2\alpha s}}}\frac{\phi \left(\frac{\sqrt{\text{e}^{-2\alpha t}-1}\;\Phi^{-1}(v)-\sqrt{\text{e}^{-2\alpha s}-1}\;\Phi^{-1}(u)}{\sqrt{\text{e}^{-2\alpha t}-\text{e}^{-2\alpha s}}}\right)}{\phi \left(\Phi^{-1}(v)\right)}.
\label{OUcop}
\end{equation}
The same expression can also can be derived by directly plugging the transition pdf and the quantile function into equation \eqref{TransitionCopulaDensity}. 
\begin{remark} \label{ZZZ} If we determine the copula applying Theorem \ref{nonmonotone} instead of equation \eqref{TransitionCopulaDensity} we immediately note that the copula density does not depend on the parameters $\beta$ and $\sigma$ and that it depends on $\alpha$ only through the time transformation. The role of the parameter $\alpha$ can be interpreted as follows: for any given time $t$, the Gaussian copula $C_{0,t}(u,v)$ of the Ornstein Uhlenbeck process is close to the independent copula when $\alpha$ is very large and close to the copula of perfect positive dependence (also called the upper Fr\'echet bound) when $\alpha$ is small.
Parameter $\alpha$ encodes how short is the time range of the dependence, or the ``memory'' of the process.
\end{remark}

\subsubsection{Reflected Brownian motion}
Applying the purely spatial transformation $R_{t}=\abs{B_{t}}$ to a Brownian motion one gets the so-called reflected Brownian motion. The transformation maps both boundaries of the Brownian motion to $+\infty$, which remains a natural boundary. A lower regular boundary appears at $0$. The transition pdf solves the heat equation with a reflecting boundary condition in $0$. Since the transformation is piecewise monotone, we can apply Theorem \ref{nonmonotone} to get the copula density
\begin{equation}
c^{\text{RB}}_{s,t}(u,v)=\frac{\sqrt{t}}{2\sqrt{t-s}}\left[\frac{\phi \left(\frac{\sqrt{t}\Phi^{-1}(v)-\sqrt{s}\Phi^{-1}(u)}{\sqrt{t-s}}\right)}{\phi \left(\Phi^{-1}(v)\right)}+\frac{\phi \left(\frac{\sqrt{t}\Phi^{-1}(v)+\sqrt{s}\Phi^{-1}(u)}{\sqrt{t-s}}\right)}{\phi \left(\Phi^{-1}(v)\right)}\right],
\label{copulaRB}
\end{equation}
which is a mixture of Gaussian copulae.

\subsubsection{A special case of the CIR process}\label{special}
The process with SDE
\begin{equation}dX_{t}=(-\alpha X_{t} + \frac{\sigma^{2}}{4}) dt + \sigma\sqrt{X_{t}}\, dB_{t}\label{CIRspecial}\end{equation}
represents a special instance of the more general CIR process discussed in Section \ref{CIRrelated} below. The diffusion interval of $X_t$ is $(0,+\infty)$, where $0$ and $+\infty$ are a regular and a natural boundary, respectively (cf. \cite{fellerCIR}). In such a case the non-monotone {\STT} \eqref{spacetime} given by $\varphi(t)=\frac{\log(\alpha t + 1)}{\alpha}$ and $\psi(t,x)=\frac{\sigma^{2} x^{2}}{4(\alpha t + 1)}$ maps a Brownian motion $B_{t}$ into the solution $X_{t}$ of \eqref{CIRspecial}. The process can be equivalently obtained by applying the same transformation to a reflected Brownian motion $R_{t} =\abs{B_{t}}$ on the restricted domain $(0,\infty)$, where the transformation becomes monotone. Using the latter approach, we obtain the copula density of $X_{t}$ by a time transformation of the copula density of the reflected Brownian motion \eqref{copulaRB}
\[c^{\text{CIR special}}_{\varphi(s),\varphi(t)}(u,v) =c^{\text{RB}}_{s,t}(u,v),\]
which is once again a mixture of Gaussian copulae. 

\subsection{The CIR model and related processes} \label{CIRrelated}
The CIR process is the solution $\left\{ X_t \right\}_{t \geq t_0}$ of the SDE
\begin{equation}dX_{t}=(-\alpha X_{t} + \beta) dt + \sigma\sqrt{X_{t}}\, dB_{t}\label{CIRsde}\end{equation}
on $I=(0,\infty)$ with  $\beta>0$ and $B_{t}$ a standard Brownian motion. Let us denote by $x_{0}$ the initial condition at $t=0$. In the literature the CIR model is also named Feller process or linear drift, linear variance process.  If $\alpha>0$, the process admits a stationary gamma distribution. The lower boundary in $0$ is a singular point whose nature has been studied in \cite{fellerCIR}. If $\beta\geq\frac{\sigma^{2}}{2}$ the process never reaches zero (entrance boundary), while if $0<\beta<\frac{\sigma^{2}}{2}$ the process can reach zero and in order to solve the Kolmogorov equations a boundary condition needs to be imposed (we choose reflection). In both cases (cf. \cite{cir, rayleigh}) the transition pdf reads
\begin{equation}
f^{\text{CIR}}_{t|s}(x_{2}|x_{1})= c_{t-s}\, f_{\chi^2}\left(c_{t-s}\, x_{2}; \, \frac{4\beta}{\sigma^2}, \, c_{t-s}\, e^{-\alpha (t-s)} x_{1}\right), \label{densityCIR}
\end{equation}
where $c_{r}:=\frac{4\alpha }{\sigma^2 \left(1-e^{-\alpha (r)}\right)}$ and $f_{\chi^2}()$ is the non central chi-square density (cf. \cite{johnsonCUD2})

\[
f_{\chi^2}\left(z; \nu, \lambda \right) := \frac{1}{2}e^{-(z + \lambda)/2} \left( \frac{z}{\lambda}\right)^{\frac{\nu-2}{4}} I_{\frac{\nu-2}{2}}\left( \sqrt{\lambda z} \right).
%\label{nc_chisq}
\]
Here $I_{a}(\cdot)$ is the modified Bessel function of the first kind, defined by
\begin{equation*}
I_{a}(z) := \sum_{m=0}^\infty \frac{1}{m!\;\Gamma(m+a+1)} \left( \frac{z}{2}\right)^{2m+a}. 
%%\label{modBesselI}
\end{equation*}
Stable numerical algorithms that avoid direct evaluation of the Bessel function are available to evaluate the pdf \eqref{densityCIR}. The corresponding distribution function $F^{\text{CIR}}_{t_{2}|t_{1}}(x_{2}|x_{1})$ and its quantiles are also easily computable. A ready-to-use implementation is available in R, cf. \cite{Rmanual}.
A direct substitution into equation \eqref{TransitionCopulaDensity} provides a computable formula for the copula density of the CIR process:
\begin{multline} \label{Fellercopula}
c^{\text{Fe}}_{s,t}(u,v) = \frac{c_{t-s} f_{\chi^2} \left( \dfrac{c_{t-s}}{c_t} F_{\chi^2}^{-1}(v; \gamma, 0); \; \gamma, \; \dfrac{c_{t-s}}{c_s} e^{-\alpha(t-s)} F_{\chi^2}^{-1}(u; \gamma, 0) \right)}{c_{t} f_{\chi^2} \left( F_{\chi^2}^{-1}(v; \gamma, 0); \; \gamma, \; c_{t} e^{-\alpha t} x_{0} \right)},
\end{multline}
where $\gamma:=\frac{4\beta}{\sigma^2}$, $c_r:=\frac{2\alpha}{\sigma^2\left(  1-e^{-\alpha r}\right)}$, and $x_{0}$ is the initial condition at $t=0$.
However, as we prove in Remarks \ref{UUU} and \ref{DDD} below, such a method of calculation would hide much information about the effective dependence of the copula density on the parameters of the process.
\begin{remark}\label{UUU}
Despite the dependence of the transition pdf $f_{t_2|t_{1}}(x_{2}|x_{1})$, the marginal distribution $F_{t}(x)$ and of the quantile function $F^{-1}_{t}(p)$ of the CIR process on the three parameters $\alpha$, $\beta$, and $\sigma$, the copula density effectively depends on two of them only, namely $\alpha$ and $\gamma=\frac{4\beta}{\sigma^{2}}$.  A rigorous proof of the previous statement will be given in Remark \ref{DDD}. An interpretation of the role of the two parameters is given as follows. Parameter $\alpha$ retains the same role it has in the Ornstein-Uhlenbeck process, i.e. it is related with the range of the dependence of the process (the larger $\alpha$, the shorter the memory). Parameter $\gamma=\frac{4\beta}{\sigma^{2}}$ measures the ratio between the drift and the noise. If $\gamma\leq2$ the noise is strong enough to drive the process to 0, which is a reflecting barrier in our setting. If $\gamma=1$, in particular, then the process can be obtained from a reflected Brownian motion by a monotone {\STT} (see Section \ref{special}) and hence its copula is a mixture of Gaussian copulae. On the other hand, if $\gamma<2$ the drift prevails and the process does not reach the lower barrier. \end{remark}

\subsubsection{Rayleigh Process}
Any diffusion process \eqref{SDE} can be transformed into one with a constant unit diffusion coefficient through the {\STT}
\[
\psi(t,x)=\int\frac{dx}{\sigma(x)}.
\]

Such {\STT} for the CIR process $X_{t}$ of equation \eqref{CIRsde} becomes
\begin{equation}
Y_{t}=\frac{2\sqrt{X_{t}}}{\sigma}
\label{cir2ray}
\end{equation}
and one gets the Rayleigh process $Y_{t}$. Note that $\sigma$ in \eqref{cir2ray} is not any more the diffusion coefficient of a general diffusion as in the previous formula, but a parameter of equation  \eqref{CIRsde}.
The transformation is monotone and keeps the boundary points $0$ and $+\infty$ and their nature.
The diffusion process $Y_{t}$ solves the SDE
\begin{equation}dY_{t}=\left(\frac{a}{Y_{t}}+b Y_{t}\right)\, dt+ dB_{t}\label{RAY}\end{equation}
with $a=\frac{4\beta}{2\sigma^{2}}-\frac{1}{2}$, $b=-\frac{\alpha}{2}$, and $B_{t}$ a standard Brownian motion. The copula density of $Y_{t}$ is therefore identical to that of a CIR process.
\begin{remark}\label{DDD}It is now easy to prove the statement in Remark \ref{UUU} that the copula of the CIR process depends only on the two parameters $\alpha$ and $\gamma$. It immediately follows by the fact that it has to be the same copula as that of the Rayleigh process obtained by transformation \eqref{cir2ray}, whose two parameters $a$ and $b$ are indeed one to one functions of the original parameters $\alpha$ and $\gamma$.
\end{remark}
\subsubsection{Bessel Process}
Bessel processes of dimension $\delta+1$ are diffusions whose paths solve
\[dZ_{t}=\frac{\delta}{Z_{t}}dt + dB_{t}.\]
with $\delta>0$ and $B_{t}$ a standard Brownian motion. When $\delta$ is a non-negative integer, $Z_{t}$ has the same law as the euclidean norm of a $(\delta+1)$-dimensional Brownian motion.
The monotone transformations \eqref{spacetime} 
\begin{equation}\label{RAYtoBES}\varphi(t)=\frac{1 - \text{e}^{-2 b t}}{2 b} \qquad \psi(t,x)=x \text{e}^{-b t}\end{equation}
map a Rayleigh diffusion \eqref{RAY} into a Bessel process with parameter $\delta=a$. The diffusion interval remains $(0,\infty)$ and the nature of the boundaries is preserved. Using \eqref{cir2ray} and \eqref{RAYtoBES} we directly map a CIR process into a Bessel process with $\delta=\frac{\frac{4\beta}{\sigma^{2}}-1}{2}$, and hence the two copulae only differ for the time change
\[c^{\text{CIR}}_{s,t}(u,v)=c^{\text{BES}}_{\varphi(s),\varphi(t)}(u,v),\]
where the function $\varphi(\cdot)$ is given in equation \eqref{RAYtoBES}.

\subsection{Comparison of different copula densities}
In Section \ref{Brelated} and \ref{CIRrelated} we have introduced different copulae of diffusion processes. Apart from time changes, these copulae can be classified into three groups: Gaussian copulae (Brownian motion with or without drift, geometric brownian motion and Orstein-Uhlenbeck process), mixtures of Gaussian copulae (Reflected Brownian motion and the special case of the CIR process) and CIR-related copulae (CIR process in the general case, Rayleigh and Bessel processes).
In Figure \ref{plot1}, we present a visual comparison of the three copula densities.
 \begin{figure}[htb]
   \centering
   \includegraphics[width=\textwidth]{./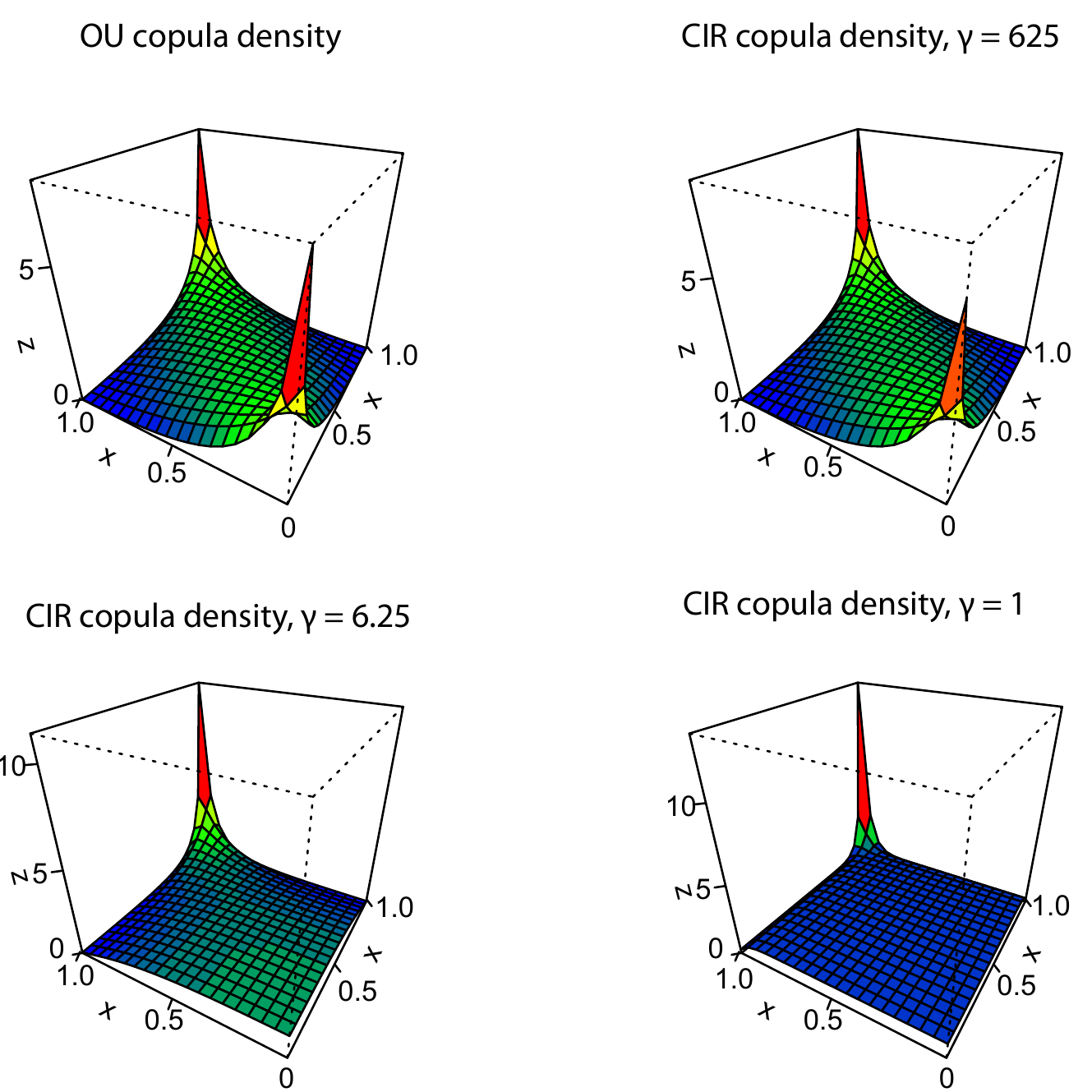}
   \caption{Comparison of copula densities.}\label{plot1}
\end{figure}

The underlying Ornstein-Uhlenbeck and CIR processes (also in the case when the CIR is a time-changed Brownian motion) are scaled to a common time scale through a suitable choice of the parameters of their SDEs \eqref{OUsde}, \eqref{CIRspecial}, and \eqref{CIRsde}. The scale of the dependence is established by the magnitude of the mean reverting parameter $\alpha$, which is kept fixed at $\alpha=0.1$. Other parameters common to all processes are  $s=30$ (stationary regimes), $t= 30.5$, $x_0=10$. The Gaussian copula density \eqref{OUcop} is plotted in the upper left corner. It is symmetric in the arguments and more concentrated around the diagonal. Two spikes are visible at $(0,0)$ and $(1,1)$. The CIR copula is plotted at three different values of the parameter $\gamma$ in the other panels of the Figure. For very large $\gamma$, the noise is very small with respect to the drift, and the CIR copula closely resembles the Gaussian one. In this regime the probability that the process approaches values close to 0 is so small that the effect of the barrier at $0$ is negligible. In particular, the upper right corner of the Figure shows the CIR copula density corresponding to $\gamma=625$. The smaller is $\gamma$, the larger is the noise compared to the drift. For $\gamma=6.35$ the copula becomes asymmetric, with a peak at $(1,1)$ and a flat region around $(0,0)$. Flat regions in the copula density correspond to regions of independence (note that the independent copula $C(u, v) = uv$ has a flat density $c(u,v)=1$, cf. \cite{nelsen}). In such regions the noise is strong enough compared to the drift to spread consecutive observations quickly, so that they appear almost independent. Indeed the stronger the noise is, the weaker is the dependence between small observations. Conversely, because the drift of the CIR process grows linearly with the value taken by the process while the noise grows only sublinearly, the dependence among large values remains strong. Therefore, large values of the process are more persistent than small values, especially in the presence of a strong noise.
The lower panels of the figure show the copula density for $\gamma=6.25$ and $\gamma=1$. The latter corresponds to the case when the CIR process is a time-changed reflected Brownian motion.
As apparent from the analysis of Figure \ref{plot1}, the three copula densities that we analyzed have different shapes which mark different properties of the associated dependence structures. The information a copula density conveys should be taken into account when modelling natural phenomena via diffusion processes.

\subsection{Application to neuronal modeling}
Ornstein-Uhlenbeck and CIR processes find extensive applications in neuroscience (cf. \cite{SacerdoteGiraudo2011, tamborrinojacobsen}), where they are used to model the temporal evolution of neuronal membrane potentials. Neural cells in the cortex communicate via fast discharges of electrical impulses called action potentials, or ``spikes''. Spikes arriving to a cell from input (``pre-synaptic'') neurons change the electrical membrane potential of the receiving cell by altering the difference between extra- and intra-cellular concentration of ions. 
%These input-induced deflections in the membrane potential of the receiving cell are called post-synaptic potentials (PSPs). 
In the integrate-and-fire model whenever the membrane potential hits a critical upper threshold -called firing threshold- a spike is generated and sent to receiving (``post-synaptic'') cells. Immediately after spike generation, the membrane potential is reset to a lower reset value. If a cell receives weak inputs from a large number of cells 
%, i.e. if its PSPs are small and frequent, 
the sub-threshold evolution of its membrane potential until spike generation can be effectively modeled via continuous diffusions such as the Ornstein-Uhlenbeck and CIR (usually named Feller in the field) processes. If so, the spike times of the cell are first-passage times through the firing threshold. 

A comparison of the Ornstein-Uhlenbeck and Feller models based on the distribution of the interspike intervals has been carried out in \cite{lauraComparison}. The lower-bounded Feller model is often considered more realistic because compatible with the fact that neuronal membrane potentials cannot become arbitrarily negative. Such a consideration however only supports the marginal distribution of the Feller process compared to the Ornstein-Uhlenbeck one, and does not relate to the serial dependencies imposed by the two processes. The two processes are known to share the same autocovariance structure, which indeed only depends on the drift of the SDE. 
%Here we compare the two copulae for a parameter range compatible with the biological setting. 
Realistic ranges for the parameters of the two diffusion process can be obtained by statistical estimation from experimental data of intracellular recordings, as done in \cite{bibbona,BibbonaLanskySirovich2010,Bibbona2008} and by theoretical reasoning (cf. \cite{lauraComparison}). In the case of the Ornstein-Uhlenbeck model \eqref{OUsde} the only parameter relevant to the copula is $\alpha$, which following the references above we take to be $0.1$ (see Figure \ref{plot1}, upper-left). In the case of the CIR process we also set $\alpha=0.1$, while realistic values for $\gamma$ range from 20 to 2500 ($\gamma=625$ in Figure \ref{plot1}, upper-right). As shown in Figure \ref{plot1}, in this parameter region the two copula densities are very similar. In light of these preliminary results, we suggest a new diffusion model for the temporal evolution of neuronal membrane potentials by combining the simpler copula of an Ornstein-Uhlenbeck process with the more realistic non central chi-square marginals of the CIR process. 

\section*{Acknowledgements}

We acknowledge financial support from the AMALFI project (Advanced Methodologies for the Analysis and management of the Future Internet, Compagnia di San Paolo and University of Torino), from the local research grant  2015 ``Stochastic modelling beyond diffusions'' of the University of Torino and from Indam-Gncs.

% Place the text of your acknowledgements after the \acks command.
% \acks generates the heading "Acknowledgements".
% If you wish to make only one acknowledgement, use \ack.
% \ack generates the heading "Acknowledgement".

% Reference list
%
% References should be in the following form (or the BibTeX file
% apt.bst should be used):
%
% For a journal:
% Surname, Initial (year). Title of paper. {\em Journal title}
% {\bf Vol,} page--range.
%
% For a book:
% Surname, Initial (year). {\em Book title}. Publisher, Address.
%
% Note the following example of a reference list.

\bibliographystyle{apt}
\bibliography{bibl}

\begin{thebibliography}{10}

\bibitem{antonio}
{\sc Ahmadi, J., Di~Crescenzo, A. and Longobardi, M.}
\newblock On dynamic mutual information for bivariate lifetimes.
\newblock {\em Journal of Applied Probability\/}.
\newblock to appear, arXiv:1411.6257.

\bibitem{elisa}
{\sc Benedetto, E. and Sacerdote, L.} (2013).
\newblock On dependency properties of the isis generated by a two-compartmental
  neuronal model.
\newblock {\em Biological Cybernetics\/} {\bf 107,} 95--106.

\bibitem{pillow}
{\sc Berkes, P., Wood, F. and Pillow, J.~W.} (2009).
\newblock Characterizing neural dependencies with copula models.
\newblock In {\em Advances in Neural Information Processing Systems 21}. ed.
  D.~Koller, D.~Schuurmans, Y.~Bengio, and L.~Bottou.
\newblock Curran Associates, Inc. pp.~129--136.

\bibitem{bibbona}
{\sc Bibbona, E. and Ditlevsen, S.} (2013).
\newblock Estimation in discretely observed diffusions killed at a threshold.
\newblock {\em Scand. J. Stat.\/} {\bf 40,} 274--293.

\bibitem{Bibbona2008}
{\sc Bibbona, E., Lansky, P., Sacerdote, L. and Sirovich, R.} (2008).
\newblock Errors in estimation of the input signal for integrate-and-fire
  neuronal models.
\newblock {\em Phys. Rev. E\/} {\bf 78,} Art. No. 031916.

\bibitem{BibbonaLanskySirovich2010}
{\sc Bibbona, E., Lansky, P. and Sirovich, R.} (2010).
\newblock Estimating input parameters from intracellular recordings in the
  {F}eller neuronal model.
\newblock {\em Phys. Rev. E\/} {\bf 81,} Art. No. 031916.

\bibitem{givencovariance}
{\sc Bibby, B.~M., Skovgaard, I.~M. and S{\o}rensen, M.} (2005).
\newblock Diffusion-type models with given marginal distribution and
  autocorrelation function.
\newblock {\em Bernoulli\/} {\bf 11,} 191--220.

\bibitem{hyperbolicinfinance}
{\sc Bibby, B.~M. and S{\o}rensen, M.} (2003).
\newblock Chapter 6 - hyperbolic processes in finance.
\newblock In {\em Handbook of Heavy Tailed Distributions in Finance}. ed. S.~T.
  Rachev.
\newblock vol.~1 of {\em Handbooks in Finance}. North-Holland, Amsterdam
  pp.~211 -- 248.

\bibitem{bluman}
{\sc Bluman, G.~W.} (1980).
\newblock On the transformation of diffusion processes into the {W}iener
  process.
\newblock {\em SIAM J. Appl. Math.\/} {\bf 39,} 238--247.

\bibitem{intoFeller}
{\sc Capocelli, R.~M. and Ricciardi, L.~M.} (1976).
\newblock On the transformation of diffusion processes into the {F}eller
  process.
\newblock {\em Math. Biosci.\/} {\bf 29,} 219--234.

\bibitem{cherkasov}
{\sc Cherkasov, I.~D.} (1957).
\newblock On the transformation of the diffusion process to a wiener process.
\newblock {\em Theory of Probability \& Its Applications\/} {\bf 2,} 373--377.

\bibitem{luciano}
{\sc Cherubini, U., Luciano, E. and Vecchiato, W.} (2004).
\newblock {\em Copula methods in finance}.
\newblock Wiley Finance Series. John Wiley \& Sons, Ltd., Chichester.

\bibitem{cir}
{\sc Cox, J.~C., Ingersoll, Jr., J.~E. and Ross, S.~A.} (1985).
\newblock A theory of the term structure of interest rates.
\newblock {\em Econometrica\/} {\bf 53,} 385--407.

\bibitem{darsow}
{\sc Darsow, W.~F., Nguyen, B. and Olsen, E.~T.} (1992).
\newblock Copulas and {M}arkov processes.
\newblock {\em Illinois J. Math.\/} {\bf 36,} 600--642.

\bibitem{embrechtsCopulas}
{\sc Embrechts, P.} ({2009}).
\newblock {Copulas: A Personal View}.
\newblock {\em {J. Risk \& Ins.}\/} {\bf {76},} {639--650}.

\bibitem{fellerCIR}
{\sc Feller, W.} (1951).
\newblock Two singular diffusion problems.
\newblock {\em Ann. of Math. (2)\/} {\bf 54,} 173--182.

\bibitem{juliesorensenMM}
{\sc Forman, J.~L. and S{\o}rensen, M.} (2014).
\newblock A transformation approach to modelling multi-modal diffusions.
\newblock {\em J. Statist. Plann. Inference\/} {\bf 146,} 56--69.

\bibitem{rayleigh}
{\sc Giorno, V., Nobile, A.~G., Ricciardi, L.~M. and Sacerdote, L.} (1986).
\newblock Some remarks on the {R}ayleigh process.
\newblock {\em J. Appl. Probab.\/} {\bf 23,} 398--408.

\bibitem{robertaMI}
{\sc Giraudo, M.~T., Sacerdote, L. and Sirovich, R.} (2013).
\newblock Non-parametric estimation of mutual information through the entropy
  of the linkage.
\newblock {\em Entropy\/} {\bf 15,} 5154--5177.

\bibitem{neuroscience1}
{\sc Hu, M., Clark, K.~L., Gong, X., Noudoost, B., Li, M., Moore, T. and Liang,
  H.} (2015).
\newblock Copula regression analysis of simultaneously recorded frontal eye
  field and inferotemporal spiking activity during object-based working memory.
\newblock {\em Journal of Neuroscience\/} {\bf 35,} 8745--8757.

\bibitem{ikeda}
{\sc Ikeda, N. and Watanabe, S.} (1989).
\newblock {\em Stochastic differential equations and diffusion processes}
  second~ed. vol.~24 of {\em North-Holland Mathematical Library}.
\newblock North-Holland Publishing Co., Amsterdam; Kodansha, Ltd., Tokyo.

\bibitem{durante}
{\sc Jaworski, P., Durante, F., Hardle, W.~K. and Rychlik, T.} (2010).
\newblock {\em Copula theory and its applications}.
\newblock Springer.

\bibitem{jenison}
{\sc Jenison, R.~L.} (2010).
\newblock The copula approach to characterizing dependence structure in neural
  populations.
\newblock {\em Chinese Journal of Physiology\/} {\bf 53,} 373--381.

\bibitem{nonnormalOU}
{\sc Jensen, J.~L. and Pedersen, J.} (1999).
\newblock Ornstein-{U}hlenbeck type processes with non-normal distribution.
\newblock {\em J. Appl. Probab.\/} {\bf 36,} 389--402.

\bibitem{joe2014}
{\sc Joe, H.} (2014).
\newblock {\em Dependence modeling with copulas}.
\newblock CRC Press.

\bibitem{johnsonCUD2}
{\sc Johnson, N.~L., Kotz, S. and Balakrishnan, N.} (1995).
\newblock {\em Continuous univariate distributions. {V}ol. 2} second~ed.
\newblock Wiley Series in Probability and Mathematical Statistics: Applied
  Probability and Statistics. John Wiley \& Sons, Inc., New York.
\newblock A Wiley-Interscience Publication.

\bibitem{Karlin}
{\sc Karlin, S. and Taylor, H.~M.} (1981).
\newblock {\em A second course in stochastic processes}.
\newblock Academic Press Inc., New York-London.

\bibitem{Kolmogorov}
{\sc Kolmogorov, A.~N.} (1992).
\newblock On analytical methods in probability theory.
\newblock In {\em Selected Works of A. N. Kolmogorov}. ed. A.~N. Shiryayev.
\newblock vol.~26 of {\em Mathematics and Its Applications (Soviet Series)}.
  Springer Netherlands pp.~62--108.

\bibitem{kozlov}
{\sc Kozlov, R.} (2010).
\newblock The group classification of a scalar stochastic differential
  equation.
\newblock {\em J. Phys. A\/} {\bf 43,} 055202, 13.

\bibitem{lageras}
{\sc Lager{\aa}s, A.~N.} (2010).
\newblock Copulas for {M}arkovian dependence.
\newblock {\em Bernoulli\/} {\bf 16,} 331--342.

\bibitem{lauraComparison}
{\sc Lansky, P., Sacerdote, L. and Tomassetti, F.} (1995).
\newblock On the comparison of feller and ornstein-uhlenbeck models for neural
  activity.
\newblock {\em Biological Cybernetics\/} {\bf 73,} 457--465.

\bibitem{mikoschcopulae}
{\sc Mikosch, T.} (2006).
\newblock Copulas: tales and facts.
\newblock {\em Extremes\/} {\bf 9,} 3--20.

\bibitem{spizzichino}
{\sc Navarro, J. and Spizzichino, F.} (2010).
\newblock Comparisons of series and parallel systems with components sharing
  the same copula.
\newblock {\em Appl. Stoch. Models Bus. Ind.\/} {\bf 26,} 775--791.

\bibitem{nelsen}
{\sc Nelsen, R.~B.} (2006).
\newblock {\em An introduction to copulas} second~ed.
\newblock Springer Series in Statistics. Springer, New York.

\bibitem{Onken2009}
{\sc Onken, A., Gr{\"a}new{\"a}lder, S., Munk, M.~H. and Obermayer, K.} (2009).
\newblock Analyzing short-term noise dependencies of spike-counts in macaque
  prefrontal cortex using copulas and the flashlight transformation.
\newblock {\em PLoS Computational Biology\/} {\bf 5,}.

\bibitem{pellerey}
{\sc Pellerey, F. and Zalzadeh, S.} (2015).
\newblock A note on relationships between some univariate stochastic orders and
  the corresponding joint stochastic orders.
\newblock {\em Metrika\/} {\bf 78,} 399--414.

\bibitem{Rmanual}
{\sc {R Core Team}} (2015).
\newblock {\em R: A Language and Environment for Statistical Computing}.
\newblock R Foundation for Statistical Computing Vienna, Austria.

\bibitem{intoWiener}
{\sc Ricciardi, L.~M.} (1976).
\newblock On the transformation of diffusion processes into the {W}iener
  process.
\newblock {\em J. Math. Anal. Appl.\/} {\bf 54,} 185--199.

\bibitem{SacerdoteGiraudo2011}
{\sc Sacerdote, L. and Giraudo, M.~T.} (2012).
\newblock Stochastic integrate and fire models: a review on mathematical
  methods and their applications.
\newblock In {\em Stochastic Biomathematical Models with Applications to the
  Insulin-Glucose System and Neuronal Modeling}. ed. Bachar, Batzel, and
  Ditlevsen.
\newblock Springer.

\bibitem{intoWienerBarriere}
{\sc Sacerdote, L. and Ricciardi, L.~M.} (1992).
\newblock On the transformation of diffusion equations and boundaries into the
  {K}olmogorov equation for the {W}iener process.
\newblock {\em Ricerche Mat.\/} {\bf 41,} 123--135.

\bibitem{laura}
{\sc Sacerdote, L., Tamborrino, M. and Zucca, C.} (2012).
\newblock Detecting dependencies between spike trains of pairs of neurons
  through copulas.
\newblock {\em Brain Research\/} {\bf 1434,} 243--256.

\bibitem{extremesNature}
{\sc Salvadori, G., De~Michele, C., Kottegoda, N.~T. and Rosso, R.} (2007).
\newblock {\em Extremes in nature: an approach using copulas} vol.~56.
\newblock Springer Science \& Business Media.

\bibitem{tamborrinojacobsen}
{\sc Tamborrino, M., Sacerdote, L. and Jacobsen, M.} (2014).
\newblock Weak convergence of marked point processes generated by crossings of
  multivariate jump processes. applications to neural network modeling.
\newblock {\em Physica D: Nonlinear Phenomena\/} {\bf 288,} 45 -- 52.

\end{thebibliography}

\end{document}